\begin{document}

\theoremstyle{plain}
\newtheorem{theorem}{Theorem}
\newtheorem{lemma}{Lemma}
\newtheorem{claim}{Claim}
\newtheorem{proposition}{Proposition}
\newtheorem{corollary}{Corollary}
\newtheorem*{tm}{Theorem}
\newtheorem*{lm}{Lemma}
\newtheorem*{clm}{Claim}
\newtheorem*{prp}{Proposition}
\newtheorem*{cor}{Corollary}

\theoremstyle{definition}
\newtheorem{dfn}{Definition}
\newtheorem{problem}{Problem}
\newtheorem{conj}{Conjecture}
\newtheorem*{df}{Definition}
\newtheorem*{nt}{Notation}
\newtheorem*{obs}{Observation}
\newtheorem*{exm}{Example}
\newtheorem*{exms}{Examples}
\newtheorem*{prb}{Problem}
\newtheorem*{cnj}{Conjecture}

\theoremstyle{remark}
\newtheorem{remark}{Remark}
\newtheorem*{pf}{Proof}
\newtheorem*{rmk}{Remark}
\newtheorem{question}{Question}
\newtheorem*{q}{Question}
\newtheorem*{qs}{Questions}
\newtheorem*{ex}{Example}
\newtheorem*{exs}{Examples}
\newtheorem*{ack}{Acknowledgements}
\newtheorem*{fact}{Fact}
\newtheorem*{facts}{Facts}

\title{On two types of ultrafilter extensions 
of binary relations}
\author{Denis~I.~Saveliev}%
\date{}

\thanks{
\noindent
This work was partially supported by grant 
17-01-00705 of Russian Foundation for Basic Research 
and carried out at Institute for Information 
Transmission Problems of the Russian Academy of Sciences.
}

\thanks{
\noindent
{\it MSC 2010}:
Primary 
08A02, 
54D35, 
54D80; 
Secondary 
03G15, 
54B20, 
54C08, 
54C10, 
54C20. 
}

\thanks{
\noindent
{\em Keywords}:
Ultrafilter, ultrafilter extension, filter, 
filter extension, binary relation, relation algebra, 
Stone--\v{C}ech compactification, multi-valued map, 
Vietoris topology.
}

\maketitle

\newcommand{\ob}{[}
\newcommand{\cb}{]}

\newcommand{\dom}{ {\mathop{\mathrm {dom\,}}\nolimits} }
\newcommand{\ran}{ {\mathop{\mathrm{ran\,}}\nolimits} }

\newcommand{\cf}{ {\mathop{\mathrm {cf\,}}\nolimits} }
\newcommand{\inter}{ {\mathop{\mathrm {int\,}}\nolimits} }
\newcommand{\cl}{ {\mathop{\mathrm {cl\,}}\nolimits} }
\newcommand{\cll}{ {\mathop{\mathrm {cl^{-}\,}}\nolimits} }
\newcommand{\clu}{ {\mathop{\mathrm {cl^{+}\,}}\nolimits} }
\newcommand{\lcl}{ {\mathop{\mathrm {lcl\,}}\nolimits} }
\newcommand{\rcl}{ {\mathop{\mathrm {rcl\,}}\nolimits} }
\newcommand{\cof}{ {\mathop{\mathrm {cof\,}}\nolimits} }
\newcommand{\add}{ {\mathop{\mathrm {add\,}}\nolimits} }
\newcommand{\sat}{ {\mathop{\mathrm {sat\,}}\nolimits} }
\newcommand{\tc}{ {\mathop{\mathrm {tc\,}}\nolimits} }
\newcommand{\unif}{ {\mathop{\mathrm {unif\,}}\nolimits} }
\newcommand{\uhr}{\!\upharpoonright\!}
\newcommand{\lra}{ {\leftrightarrow} }
\newcommand{\ot}{ {\mathop{\mathrm {ot\,}}\nolimits} }
\newcommand{\pr}{ {\mathop{\mathrm {pr}}\nolimits} }
\newcommand{\cnc}{ {^\frown} }
\newcommand{\image}{\/``\,}
\newcommand{\scc}{\beta\!\!\!\!\beta}
\newcommand{\filt}{\varepsilon\!\!\!\!\!\;\varepsilon}
\newcommand{\pws}{P\!\!\!\!\!P}
\newcommand{\ol}{\overline}
\newcommand{\wh}{\widehat}
\newcommand{\wt}{\widetilde}

\begin{abstract}
There exist two distinct types of ultrafilter extensions 
of binary relations, one discovered in universal algebra 
and modal logic, and another, in model theory and algebra 
of ultrafilters. We show that the extension of the latter 
type is properly included in the extension of the former 
type, and describe their interaction with the relation 
algebra operations. Then we provide topological 
characterizations of both extensions and show that the 
larger extension continuously maps the space of ultrafilters 
into the space of filters endowed with the Vietoris topology. 
\end{abstract}

\section{Introduction}

There exist two known ways to extend a~binary relation~$R$ 
on a~set~$X$ to a~binary relation on the set $\scc X$ 
of ultrafilters over~$X$ (where $X$~is identified 
with the subset of~$\scc X$ consisting of principal 
ultrafilters), both canonical in a~certain sense. We 
denote these two extensions of $R$ by $\wt R$ and~$R^*$.
They are defined by letting, for all $u,v\in\scc X$,
\begin{align*}
\wt R(u,v) 
&\;\;\lra\;\;
\{x:\{y:R(x,y)\}\in v\}\in u,
\\
R^*(u,v) 
&\;\;\lra\;\;
(\forall S\in v)\;
\{x:(\exists y\in S)\:R(x,y)\}\in u.
\end{align*}

The extension~$R^*$ comes from universal algebra 
\cite{Jonsson Tarski}, later \cite{Goldblatt}, 
\cite{Goranko}, and modal logic \cite{Lemmon} 
where it is used e.g.~to characterize modal 
definability~\cite{Goldblatt Thomason} (see 
\cite{Lemmon Scott}, \cite{van Benthem},~\cite{Venema}). 
The extension~$\wt{R}$ is recently discovered in 
model theory \cite{Saveliev}, \cite{Saveliev expanded} 
and has as precursors multiplication of ultrafilters 
arising in iterated ultraproducts \cite{Kochen}, 
\cite{Frayne Morel Scott} (see~\cite{Chang Keisler})
and especially so-called algebra of ultrafilter essentially 
dealing with ultrafilter extensions of semigroups and known 
by its numerous applications (see~\cite{Hindman Strauss}).

In fact, in \cite{Jonsson Tarski}, \cite{Goldblatt}, 
\cite{Goranko} the ultrafilter extension~$R^*$ is defined 
for $n$-ary relations~$R$, and also in \cite{Saveliev}, 
\cite{Saveliev expanded} the ultrafilter extension~$\wt{R}$ 
is defined for $n$-ary~$R$, with any finite~$n$. 
Moreover, \cite{Goranko} and \cite{Saveliev}, 
\cite{Saveliev expanded} define also (one type of) 
ultrafilter extension of $n$-ary maps (which generalizes 
continuous extension of unary maps as well as the mentioned 
constructions of multiplication of ultrafilters and of 
ultrafilter extension of semigroups). Thus in fact  
\cite{Goranko}, \cite{Saveliev}, \cite{Saveliev expanded} 
study two types of ultrafilter extensions of arbitrary 
first-order models.

In this article, however, we deal only with binary
relations, and with both types of their extensions.
The extension~$\wt{R}$ of binary relations~$R$ of 
special form, namely, linear orders, are studied 
in~\cite{Saveliev(orders)}.
For basic definitions and properties of both types of 
ultrafilter extensions of first-order models, as well 
as for some historical information, we refer the reader 
to the introductory part of~\cite{Poliakov Saveliev 2018}. 
Some further information of ultrafilter extensions of 
first-order models can be found in \cite{Saveliev Shelah} 
and \cite{Poliakov Saveliev 2018}.

This article goes as follows. 
In Section~2, we study the relationships of the two types 
of ultrafilter extension of binary relations, and describe 
their interplay with operations of relation algebra. We 
also characterize the~${}^*$-extension via projections.
In Section~3, we provide topological characterizations of 
both extensions. The~${}^*$-extension is characterized 
via the closure operation in the square $\scc X\times\scc X$, 
while the~$\wt{\;}$-extension via certain operations of left 
and right closure, and also via continuous extensions of 
homomorphisms. We also consider there generalizations of 
the left and right closure to arbitrary topological spaces. 
In Section~4, we show that the~${}^*$-extension of relations 
on $X$ can be considered as continuous extension of certain 
associated maps into the space~$\filt X$ of filters over~$X$ 
endowed with the Vietoris topology 
(by identifying the filters with closed sets in~$\scc X$).


\section{Relationships between $R^*$ and $\wt R$ and relation algebra\/}

In this section we study relationships between two extensions,
${}^*$ and~$\,\wt{}\;\;$, of binary relations, and their interplay 
with operations of relation algebra.

\subsection*{Relationships between $R^*$ and $\wt R$\/}

The following lemma contains a~half of information required for 
Theorem~\ref{2.2}. Note that in it, (0)~describes a~given~$R$ as 
a~relation on~$\scc X$, while (i) and~(v)~describe $R^*$ and~$\wt R$,
respectively.

\begin{lemma}\label{2.1}
Let $R\subseteq X\!\times X$ and $u,v\in\scc X$. 
The implications between the following formulas 
\begin{align*}
\phantom{iiiii}\text{{\rm (0)}}\quad&
(\exists x,y\in X)\;
(\{x\}\in u\wedge\{y\}\in v\wedge R(x,y)),
\\
\phantom{iiiii}\text{{\rm (i)}}\quad&
(\forall S\in v)\;
\{x:(\exists y\in S)\:R(x,y)\}\in u,
\\
\phantom{iiii}\text{{\rm (ii)}}\quad&
(\forall S\in u)\;
\{y:(\exists x\in S)\:R(x,y)\}\in v,
\\
\phantom{iii}\text{{\rm (iii)}}\quad&
(\exists S\in v)\;
\{x:(\forall y\in S)\:R(x,y)\}\in u,
\\
\phantom{iii}\text{{\rm (iv)}}\quad&
(\exists S\in u)\;
\{y:(\forall x\in S)\:R(x,y)\}\in v,
\\
\phantom{iiii}\text{{\rm (v)}}\quad&
\quad\;\;\;\{x:\{y:R(x,y)\}\in v\}\in u,
\\
\phantom{iii}\text{{\rm (vi)}}\quad&
\quad\;\;\;\{y:\{x:R(x,y)\}\in u\}\in v
\end{align*}
are as follows:
\begin{align*}
\begin{CD}
@.@.\text{{\rm (vi)}}@>>>\text{{\rm (i)}}@=\text{{\rm (ii)}}
\\
@.@.@AAA@AAA@.
\\
\text{{\rm (0)}}@>>>\text{{\rm (iii)}}@=\text{{\rm (iv)}}@>>>\text{{\rm (v)}}@.
\end{CD}
\end{align*}
\rule{0em}{0.5em}
\end{lemma}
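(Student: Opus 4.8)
The statement is a diagram of implications among six first-order formulas built from $R$, $u$, and $v$. I would prove it by establishing each arrow in the displayed $\mathrm{CD}$ diagram separately, exploiting the evident left–right symmetry (swapping the roles of $u$ and $v$ and of $R$ with its converse $R^{-1}$) to halve the work. Concretely, the symmetry sends (i)$\leftrightarrow$(ii), (iii)$\leftrightarrow$(iv), (v)$\leftrightarrow$(vi), and fixes (0); so it suffices to prove: (0)$\Rightarrow$(iii); (iii)$\equiv$(iv); (iii)$\Rightarrow$(v); (vi)$\Rightarrow$(i); (i)$\equiv$(ii); and (iv)$\Rightarrow$(i) (the last one reading through the equivalences as (iii)$\Rightarrow$(i) or via (vi)).

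For (0)$\Rightarrow$(iii): if $\{x\}\in u$, $\{y\}\in v$, and $R(x,y)$, take $S=\{y\}\in v$; then $\{x':(\forall y'\in S)\,R(x',y')\}=\{x':R(x',y)\}\ni x$, and since $\{x\}\in u$ this set lies in $u$. For (iii)$\equiv$(iv) and (i)$\equiv$(ii): these are the genuinely substantive equivalences. The forward direction of (iii)$\Rightarrow$(iv) goes: fix $S\in v$ witnessing (iii), so $T:=\{x:(\forall y\in S)\,R(x,y)\}\in u$; I claim $S\subseteq\{y:(\forall x\in T)\,R(x,y)\}$, which is immediate from the definition of $T$, and hence the latter set is in $v$ since $S\in v$. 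The converse is symmetric. The pair (i)$\equiv$(ii) is the familiar self-duality of the $\,{}^*$-extension; here I would argue contrapositively or use the ultrafilter property directly: (i) fails iff there is $S\in v$ with $\{x:(\exists y\in S)\,R(x,y)\}\notin u$, i.e. $A:=\{x:(\forall y\in S)\,\neg R(x,y)\}\in u$; then $\{y:(\exists x\in A)\,R(x,y)\}$ is disjoint from $S$, hence not in $v$, so (ii) fails — and symmetrically. For (iii)$\Rightarrow$(v): given $S\in v$ with $\{x:(\forall y\in S)\,R(x,y)\}\in u$, note that for each such $x$ we have $S\subseteq\{y:R(x,y)\}$, so $\{y:R(x,y)\}\in v$; thus $\{x:(\forall y\in S)\,R(x,y)\}\subseteq\{x:\{y:R(x,y)\}\in v\}$, and the latter is in $u$. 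Finally (vi)$\Rightarrow$(i): assume $B:=\{y:\{x:R(x,y)\}\in u\}\in v$ and fix $S\in v$; then $S\cap B\in v\neq\emptyset$, pick $y_0\in S\cap B$, so $\{x:R(x,y_0)\}\in u$ and this set is contained in $\{x:(\exists y\in S)\,R(x,y)\}$, which is therefore in $u$.

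The step I expect to be the main obstacle — though "obstacle" overstates it — is getting the ultrafilter bookkeeping in (i)$\equiv$(ii) exactly right, since it is the only place where the ultrafilter property (rather than mere filter closure) is used in an essential, non-obvious way: one must pass to the complement $\{x:(\forall y\in S)\,\neg R(x,y)\}$ and recognize it as a member of $u$, which is precisely what fails for filters. Everywhere else the arrows follow from monotonicity of the constructions and the filter axioms (upward closure and closure under finite intersection), so the proof is a disciplined but routine chase; the one design decision worth making explicitly up front is to state and use the $u\!\leftrightarrow\!v$, $R\!\leftrightarrow\!R^{-1}$ symmetry as a standing reduction, so that (ii), (iv), (vi) need no separate argument.
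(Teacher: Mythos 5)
Your proposal is correct and complete: every edge of the diagram is either proved directly or is the image of a proved edge under the $u\leftrightarrow v$, $R\leftrightarrow R^{-1}$ symmetry (legitimate because the lemma quantifies over all $R,u,v$), possibly composed with a proved equivalence. The overall strategy --- a standing symmetry reduction plus a few direct arguments, with the ultrafilter property doing real work only in (i)$\leftrightarrow$(ii) --- is also the paper's, and your contrapositive argument for (i)$\leftrightarrow$(ii) is essentially the paper's proof reorganized. The genuine difference lies in the remaining arrows: the paper obtains (iii)$\leftrightarrow$(iv) from (i)$\leftrightarrow$(ii), and (iii)$\to$(v) from (v)$\to$(i), by observing that (iii) and (iv) are the negations of (i) and (ii) applied to the complemented relation $\neg R$ (and similarly for (v)); you instead prove (iii)$\leftrightarrow$(iv) and (iii)$\to$(v) directly by monotonicity, and you prove (vi)$\to$(i) directly where the paper proves (v)$\to$(i) and gets (vi)$\to$(ii) by symmetry. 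Your route is marginally more elementary --- those arguments use only upward closure of filters, not ultrafilterness --- while the paper's complementation trick is more economical and foreshadows the dualities $\wt R=-(\wt{-R})$ and $-((-R)^*)\subseteq R^*$ used in Theorem~\ref{2.2}. One bookkeeping point: your target list names ``(iv)$\Rightarrow$(i)'', which is not an edge of the diagram; the two vertical edges are (iv)$\to$(vi) and (v)$\to$(i), and both are in fact covered by your reduction ((iv)$\to$(vi) is the symmetric image of your (iii)$\to$(v), and (v)$\to$(i) follows from the symmetric image (v)$\to$(ii) of your (vi)$\to$(i) together with (ii)$\to$(i)), but this should be said explicitly rather than left to the reader.
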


\begin{proof}
(0)$\to$(iii)\;
Easy.

(i)$\to$(ii)\;
Let $u,v\in\scc X$ be such that (i)~holds. 
Toward a~contradiction, assume that (ii)~fails:
$$
(\exists S\in u)\;
\{y:(\exists x\in S)\:R(x,y)\}\notin v.
$$
Since $v$~is ultra, 
this is equivalent to 
$$
(\exists S\in u)\;
\{y:(\forall x\in S)\:\neg\,R(x,y)\}\in v.
$$
Fix an~$S$ witnessing this. Denote the set 
$\{y:(\forall x\in S)\:\neg\,R(x,y)\}$ by~$B$. 
We have $B\in v$. Therefore, by~(i),
$$
\{x:(\exists y\in B)\:R(x,y)\}\in u. 
$$
Denote the set $\{x:(\exists y\in B)\:R(x,y)\}$ by~$A$. 
We have $A\in u$. Hence $A\cap S\in u$, so pick some 
$x_0\in A\cap S$. On the one hand, since $x_0\in A$,
there exists $y_0\in B$ such that $R(x_0,y_0)$. 
On the other hand, we have $\neg R(x_0,y_0)$ since 
$x_0\in S$ and $y_0\in B$. A~contradiction.

(ii)$\to$(i)\;
This follows from (i)$\to$(ii) since 
(i)~turns into~(ii) and conversely by interchanging 
$x,u$ with $y,v$ and taking $R^{-1}$ rather than~$R$.

(iii)$\,\lra\,$(iv)\;
This follows from (i)$\,\lra\,$(ii) since 
(iii) and (iv)~are the negations of (i) and~(ii) 
applied to $\neg\,R$ rather than~$R$.

(v)$\to$(i)\;
Let $u,v\in\scc X$ be such that (v)~holds. 
Denote the set $\{x:\{y:R(x,y)\}\in v\}$ by~$A$ 
and, for any given~$x$, the set $\{y:R(x,y)\}$ by~$A_x$. 
We have $A\in u$ and $A_x\in v$ whenever $x\in A$.

Toward a~contradiction, assume that (i)~fails: 
$$
(\exists S\in v)\;
\{x:(\exists y\in S)\:R(x,y)\}\notin u.
$$
Since $u$~is ultra, this is equivalent to 
$$
(\exists S\in v)\;
\{x:(\forall y\in S)\:\neg\,R(x,y)\}\in u.
$$
Fix an~$S$ witnessing this and denote the set 
$\{x:(\forall y\in S)\,\neg\,R(x,y)\}$ by~$B$. 
We have $B\in u$.

Since $A,B\in u$, we get $A\cap B\in u$, so pick some 
$x_0\in A\cap B$. It follows from $x_0\in A$ that 
$A_{x_0}\in v$, whence $A_{x_0}\cap S\in v$, 
so pick some $y_0\in A_{x_0}\cap S$.

On the one hand, we have $R(x_0,y_0)$ 
since $x_0\in A$ and $y_0\in A_{x_0}$. 
On the other hand, we have $\neg\,R(x_0,y_0)$ 
since $x_0\in B$ and $y_0\in S$. A~contradiction.

(vi)$\to$(ii)\;
This follows from (v)$\to$(i) since
(i)~turns into~(ii) and (v)~into~(vi) by interchanging 
$x,u$ with $y,v$ and taking $R^{-1}$ rather than~$R$.

(iii)$\to$(v)\;
This follows from (i)$\,\lra\,$(v) since 
(iii) and (v)~are the negations of (i) and (v)~itself 
applied to $\neg\,R$ rather than~$R$. 
\end{proof}


\subsection*{The extensions versus relation algebra\/}

In the sequel, we denote by~$-$ the complement operation; 
e.g.$-((-R)^*)$ written explicitly is 
$(\scc X\!\times\scc X)\!\setminus\!((X\!\times X)\!\setminus\!R)^*$.

\begin{theorem}\label{2.2}
The inclusions (depicted by arrows) 
between any given $R\subseteq X\!\times X$ and its extensions 
$R^*$, $\wt R$, $-((-R)^*)$, $(\wt{R^{-1}})^{-1}$ are as follows:
\begin{align*}
\begin{CD}
@.(\wt{R^{-1}})^{-1}@>>>R^*@=((R^{-1})^*)^{-1}
\\
@.@AAA@AAA@.@.
\\
R@>>>-((-R)^*)@>>>\wt R@=-(\wt{-R})
\end{CD}
\end{align*}
\rule{0em}{0.5em}%
In general, these inclusions are non-reversible; moreover, 
each of all possible relationships between the extensions 
is satisfied in some model. 
\end{theorem}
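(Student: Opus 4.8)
The plan is to split the theorem into two independent tasks: (A) verifying the displayed chain of inclusions, and (B) exhibiting models that make every a~priori possible relationship among the five relations actually occur (in particular, witnessing non-reversibility of each arrow). For~(A), I would first dispose of the two equalities. The identity $R^* = ((R^{-1})^*)^{-1}$ is immediate from the definition of~$R^*$ after interchanging the roles of the two coordinates, exactly as in the symmetry arguments already used in Lemma~\ref{2.1} (clauses (i) and~(ii)). The identity $\wt R = -(\wt{-R})$ unwinds to the tautology $\{x:\{y:R(x,y)\}\in v\}\in u \lra \neg\bigl(\{x:\{y:\neg R(x,y)\}\in v\}\in u\bigr)$, which holds because both $u$ and~$v$ are ultrafilters, so $\{y:\neg R(x,y)\}\in v$ is the negation of $\{y:R(x,y)\}\in v$, and then $u$ being ultra gives the outer negation. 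By the same token $(\wt{R^{-1}})^{-1}$ unwinds to formula~(vi) of Lemma~\ref{2.1} read as a relation in~$(u,v)$, while $-((-R)^*)$ unwinds to the negation of $(-R)^*(v,u)$-type data, i.e.\ to formula~(iii).

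With these translations in hand, the four genuine arrows become exactly the implications $(0)\to(\text{iii})$, $(\text{iii})\to(\text{v})$, $(\text{iii})\to(\text{i})$ (via $(\text{iii})\lra(\text{iv})\to(\text{i})$, using $(\text{iv})\to(\text{i})$ which follows from $(\text{vi})\to(\text{ii})\lra(\text{i})$), and $(\text{vi})\to(\text{i})$ --- all of which are supplied by Lemma~\ref{2.1}. So part~(A) is essentially a bookkeeping translation: match each node of the Theorem~\ref{2.2} diagram to a node of the Lemma~\ref{2.1} diagram (with $R$ replaced by $R^{-1}$ or $-R$ where the $-(\cdot)$ or $(\cdot)^{-1}$ wrappers demand it), check that the arrow directions survive the translation (a complementation reverses implications, which is why $-((-R)^*)$ sits \emph{below} $R^*$), and read off the diagram. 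The one point needing care is that $R\to -((-R)^*)$ is not in Lemma~\ref{2.1} verbatim; it says $R(x,y)$ with $\{x\}\in u$, $\{y\}\in v$ implies $\neg(-R)^*(v,u)$, which is the contrapositive of: $(-R)^*(v,u)$ forces $\neg R(x,y)$ for the principal witnesses --- and that is immediate since $(-R)^*(v,u)$ with $\{x\}\in u$, $\{y\}\in v$ yields, taking $S=\{x\}$, that $\{y:(\exists x'\in\{x\})\,(-R)(x',y)\}=\{y:\neg R(x,y)\}\in v$, contradicting $\{y\}\in v$ if $R(x,y)$.

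The main work, and the expected obstacle, is part~(B): constructing concrete $R$ on concrete (necessarily infinite) $X$ realizing each configuration --- e.g.\ $R^*$ strictly larger than $\wt R$, $\wt R$ strictly larger than $-((-R)^*)$, $-((-R)^*)$ strictly larger than $R$, and $(\wt{R^{-1}})^{-1}$ incomparable-or-strictly-related to the middle terms as the diagram permits. The strategy is to take $X=\omega$ (or $\omega\times\omega$) and build $R$ so that a chosen pair of nonprincipal ultrafilters $u,v$ separates the relevant formulas: for the $R^*\nsubseteq\wt R$ gap one wants $S\mapsto\{x:(\exists y\in S)R(x,y)\}$ to land in~$u$ for every $S\in v$ while $\{x:\{y:R(x,y)\}\in v\}\notin u$, which is arranged by letting each vertical section $\{y:R(x,y)\}$ be finite (so never in the nonprincipal~$v$) yet collectively covering enough of~$v$'s sets --- a standard "tree-like" or "diagonal" construction. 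For the remaining gaps one similarly engineers sections that are cofinite-but-not-everything, or that meet but are not contained in members of the ultrafilters. I would organize this as a short series of examples, each a one-paragraph explicit $R$ plus a one-line verification, and I expect the only real subtlety to be checking that the exhibited $u,v$ actually exist (invoking that nonprincipal ultrafilters extending a given filter exist) and that no unwanted inclusion sneaks in --- i.e.\ confirming each example is "tight".
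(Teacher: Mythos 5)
Your treatment of the inclusions follows the same route as the paper: identify each of the five relations with one of the formulas (0), (i), (iii), (v), (vi) of Lemma~\ref{2.1} and read the arrows off that lemma's diagram; the unwindings of the two equalities and your direct check of $R\subseteq-((-R)^*)$ are fine. Two bookkeeping slips, though. Your list of ``four genuine arrows'' does not match the theorem's diagram, which has five edges: you omit $\wt R\subseteq R^*$ (this is exactly (v)$\to$(i), proved directly in the lemma) and $-((-R)^*)\subseteq(\wt{R^{-1}})^{-1}$ (this is (iv)$\to$(vi), the remaining vertical arrow of the lemma's diagram), while listing instead the composite $-((-R)^*)\subseteq R^*$. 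Also, your derivation ``(iv)$\to$(i) follows from (vi)$\to$(ii)$\lra$(i)'' is a non sequitur as written: you need (iv)$\to$(vi) or (iv)$\to$(v)$\to$(i) as an intermediate step. Both are available from Lemma~\ref{2.1}, so every needed implication is recoverable, but the accounting as stated is wrong.

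The genuine gap is your part (B). The theorem claims not only that each arrow is non-reversible but that \emph{every} relationship among the four extensions compatible with the diagram is realized in some model; this requires enumerating the possible configurations and exhibiting a witness for each, which your proposal defers to ``a short series of examples'' that are never constructed. The paper does this with a fixed table: $\emptyset$ and $U_X$ (all four extensions coincide), $=_X$ (only $R^*$ grows --- note this already realizes your ``finite sections'' idea, so no diagonal construction is needed there), $U_X\setminus=_X$ (only $-((-R)^*)$ differs from the other three), and a strict and a non-strict linear order. The linear orders are the essential, non-obvious cases: there $\wt{<}$ and $(\wt{>})^{-1}$ are $\subseteq$-incomparable, with intersection $\vartriangleleft$ and union $\trianglelefteq\setminus=_X$, and verifying this rests on the machinery of supports of ultrafilters over linearly ordered sets from \cite{Saveliev(orders)}. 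Nothing in your sketch produces a model in which the two middle extensions are incomparable while both sit strictly between $-((-R)^*)$ and $R^*$; without such an example the ``moreover'' clause of the theorem remains unproved.
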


\begin{proof}
By definition, $R^*$~consists of the pairs $(u,v)$ satisfying~(i), 
while $\wt R$~consists of the pairs $(u,v)$ satisfying~(v). 
And, as easy to see, in the same manner 
(ii)~defines $((R^{-1})^*)^{-1}$,
(iii)~defines $-((-R)^*)$, 
(iv)~defines $-((-R^{-1})^*)^{-1}$,
and (vi)~defines $(\wt{R^{-1}})^{-1}$. 
Now the inclusions are immediate from Lemma~\ref{2.1}; let us comment 
only that $-((-R)^*)\subseteq(\wt{R^{-1}})^{-1}$ follows from 
(v)$\to$(i)$\,\lra\,$(ii) since (vi) to~(ii) is the same that 
(v) to~(i).

It remains to provide examples of all possible relationships between 
the four discussed extensions. Below $X$~is an infinite set; $<$ and 
$\le$~denote a~strict and non-strict linear orders resp., $=_Y$~the 
equality relation, and $U_Y$~the universal relation on a~set~$Y$. 
\begin{align*}
\begin{array}{c|cccccccc}
\quad\;\; R\qquad
&\quad\;-((-R)^*)
&\qquad(\wt{R^{-1}})^{-1}
&\;\wt R
&\!\!\!\!\!\!\!\!R^*
\\
\hline
\\
\emptyset\;\;
&\;\;\emptyset
&\;\;\emptyset
&\;\emptyset
&\;\emptyset
\phantom{\underbrace{x}}
\\
U_X
&\quad\;\; U_{\scc X}
&\quad\;\; U_{\scc X}
&\quad\; U_{\scc X}
&\quad\; U_{\scc X}
\phantom{\underbrace{x}}
\\
=_X
&\quad=_X
&\quad=_X
&\quad\;=_X
&\qquad=_{\scc X}
\phantom{\underbrace{|}}
\\
U_X\setminus\!=_X
&\quad U_{\scc X}\setminus\!=_{\scc X}
&\quad U_{\scc X}\setminus\!=_X
&\quad U_{\scc X}\setminus\!=_X
&\quad U_{\scc X}\setminus\!=_X
\phantom{\underbrace{|}}
\\
<\;\;
&\;\;\vartriangleleft
&\quad\;(\wt{>})^{-1}
&\quad\wt{<}
&\quad\;\;\trianglelefteq\setminus\!=_X
\phantom{\underbrace{|}}
\\
\le\;\;
&\quad\;\vartriangleleft\cup=_X
&\quad\;(\wt{\ge})^{-1}
&\quad\wt{\le}
&\quad\;\;\trianglelefteq\qquad
\end{array}
\end{align*}
\rule{0em}{0.5em}%
All these extensions are calculated immediately, 
except for the cases of $<$ and~$\le$. As for the latters, 
note that the relations $\wt{<}$ and $(\wt{>})^{-1}$ 
are incomparable by inclusion, their intersection 
is~$\vartriangleleft$, and their union is 
$\trianglelefteq\setminus\!=_X$; the relationships 
between the extensions of~$\le$ are analogous. The extensions 
of linear orders can be described in terms of so-called 
supports of ultrafilters over linearly ordered sets; we refer 
the reader to~\cite{Saveliev(orders)} for a~necessary information 
about the supports, the extensions $\wt{<}$ and~$\wt{>}$, 
and the relations $\vartriangleleft$ and~$\trianglelefteq$. 
\end{proof}


The facts concerning distributivity of the extensions w.r.t.~the 
operations of relation algebra are listed in the next theorem.

\begin{theorem}\label{2.3}
Let\, $R,S\subseteq X\!\times X$.
The following equalities hold:
\begin{align*}
-\wt R&=\wt{-R},
&
\wt{R\cup S}&=\wt R\cup\wt S,
&
\wt{R\cap S}&=\wt R\cap\wt S.
\\
(R^*)^{-1}&=(R^{-1})^*,
&
(R\cup S)^*&=R^*\cup S^*,
&
(R\circ S)^*&=R^*\circ S^*,
\end{align*}
Moreover, the following inclusions hold:
$$
-\,R^*\subseteq(-R)^*,\qquad
(R\cap S)^*\subseteq R^*\cap S^*,\qquad
\wt R\circ\wt S\subseteq\wt{R\circ S},
$$
neither of them, in general, cannot be replaced by an equality. 
Finally, $\wt R^{-1}$ and $\wt{R^{-1}}$, in general, are 
$\subseteq$-incomparable.
\end{theorem}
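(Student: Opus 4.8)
\textbf{Proof plan for Theorem~\ref{2.3}.}
The plan is to verify each identity, inclusion, and non-reversibility claim in turn, relying on Lemma~\ref{2.1} and the definitions of the two extensions. First I would dispatch the equalities on the $\wt{\;}$-side: the identity $-\wt R=\wt{-R}$ is exactly the equivalence (v)$\,\lra\,\neg\text{(v for }\neg R)$, which already appears implicitly in the proof of Lemma~\ref{2.1}; since the defining formula (v) for $\wt R(u,v)$ is $\{x:\{y:R(x,y)\}\in v\}\in u$, negating it and using that both $u$ and $v$ are ultrafilters gives $\{x:\{y:\neg R(x,y)\}\in v\}\in u$, i.e.\ $\wt{-R}(u,v)$. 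For $\wt{R\cup S}=\wt R\cup\wt S$ and $\wt{R\cap S}=\wt R\cap\wt S$, I would unfold (v): $\{y:(R\cup S)(x,y)\}=\{y:R(x,y)\}\cup\{y:S(x,y)\}$, and use that an ultrafilter contains a union iff it contains one of the two sets, and contains an intersection iff it contains both; then push the same dichotomy through the outer ultrafilter~$u$. The union case needs the elementary fact that $\{x:A_x\cup B_x\in v\}=\{x:A_x\in v\}\cup\{x:B_x\in v\}$, which is again just the ultrafilter union property applied pointwise.

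Next I would treat the $*$-side identities. The equality $(R^*)^{-1}=(R^{-1})^*$ is the symmetry already recorded as $R^*=((R^{-1})^*)^{-1}$ in Lemma~\ref{2.1} (formulas (i) and (ii)). For $(R\cup S)^*=R^*\cup S^*$, unfold (i): $(R\cup S)^*(u,v)$ says $(\forall T\in v)\,\{x:(\exists y\in T)\,(R\cup S)(x,y)\}\in u$; the set in braces is $\{x:(\exists y\in T)\,R(x,y)\}\cup\{x:(\exists y\in T)\,S(x,y)\}$, and here the subtlety is that an ultrafilter can contain a union without containing either piece for a \emph{fixed}~$T$, so the forward inclusion is not pointwise-obvious. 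I would handle it by contraposition: if $(u,v)\notin R^*$ and $(u,v)\notin S^*$, pick $T_1,T_2\in v$ witnessing the respective failures, take $T=T_1\cap T_2\in v$, and check that $T$ witnesses the failure of $(R\cup S)^*$, since $\{x:(\exists y\in T)\,R(x,y)\}\subseteq\{x:(\exists y\in T_1)\,R(x,y)\}\notin u$ and similarly for~$S$, so their union is not in~$u$. The composition identity $(R\circ S)^*=R^*\circ S^*$ is the classical Jónsson–Tarski fact; I would prove $\supseteq$ by taking a witnessing $w$ with $R^*(u,w)$ and $S^*(w,v)$ and chasing quantifiers, and $\subseteq$ by constructing the intermediate ultrafilter~$w$ as an extension of the filter generated by the sets $\{y:(\exists z\in T)\,S(y,z)\}$ for $T\in v$ together with the sets forced by $R^*(u,w)$; the compatibility of these two filter bases is where one uses that $(u,v)\in(R\circ S)^*$.

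Then I would record the three one-directional inclusions. The inclusion $-R^*\subseteq(-R)^*$ is immediate from Lemma~\ref{2.1}, since (i) for $R$ fails implies (iii) for $R$ holds implies (i) for $\neg R$ holds after rewriting — more directly, $-R^*=-((-(-R))^*)\supseteq\dots$; cleanest is to note $(-R)^*(u,v)$ is formula (i) for $\neg R$, and $-R^*(u,v)$ is $\neg$(i) for $R$, which is (iii) for $R$, i.e.\ (iii) applied to $\neg\neg R$, and (iii)$\to$(v)$\to$(i) chained on $\neg R$ — but simplest is to cite that $-R^*(u,v)\to(-R)^*(u,v)$ is exactly the contrapositive of $((-R)^*)^{c}\subseteq R^{*c}$ nonsense, so instead I would just unfold both sides: $\neg(\forall T\in v)\,\phi_R(T,u)$ gives $(\exists T\in v)\,\{x:(\forall y\in T)\,\neg R(x,y)\}\in u$, which certainly implies $(\forall T'\in v)\,\{x:(\exists y\in T')\,\neg R(x,y)\}\in u$ because for any $T'$ one intersects with the witnessing~$T$. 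The inclusion $(R\cap S)^*\subseteq R^*\cap S^*$ is monotonicity of $*$. The inclusion $\wt R\circ\wt S\subseteq\wt{R\circ S}$ follows by a direct quantifier chase from formula (v): if $\wt R(u,w)$ and $\wt S(w,v)$, then $\{x:\{y:R(x,y)\}\in w\}\in u$ and $\{y:\{z:S(y,z)\}\in v\}\in w$; intersecting inside~$w$ shows $\{y:R(x,y)\wedge\{z:S(y,z)\}\in v\}\in w$ for $u$-many~$x$, and then $\{z:(R\circ S)(x,z)\}\supseteq\{z:(\exists y)(R(x,y)\wedge S(y,z))\}$ contains $\{z:S(y_0,z)\}\in v$ for the relevant~$y_0$, giving $\wt{R\circ S}(u,v)$.

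Finally, for the non-reversibility of the three inclusions and the $\subseteq$-incomparability of $\wt R^{-1}$ and $\wt{R^{-1}}$, the plan is to supply concrete counterexamples on an infinite~$X$, exactly in the spirit of the table in Theorem~\ref{2.2}. For $-R^*\subsetneq(-R)^*$ and $(R\cap S)^*\subsetneq R^*\cap S^*$, taking $R$ (and $S$) to be the strict order $<$ on $\omega$, or the equality relation, and a pair of nonprincipal ultrafilters already separates the sides, since $(-{<})^*$ relates many ultrafilter pairs that $-{<}^*$ does not, and $(=_X\cap\,U_X\!\setminus\!=_X)^* = \emptyset^* = \emptyset$ while $=_X^*\cap(U_X\!\setminus\!=_X)^* = {=_{\scc X}}\cap(U_{\scc X}\!\setminus\!=_X)$ is nonempty. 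For $\wt R\circ\wt S\subsetneq\wt{R\circ S}$ one can again take $R=S={<}$ on $\omega$: $\wt{<}\circ\wt{<}=\wt{<}$ (as $<$ is transitive, and in fact the composition collapses), whereas $\wt{{<}\circ{<}}=\wt{<}$ too — so instead I would take $R$, $S$ with $R\circ S$ large but $\wt R$, $\wt S$ sparse, e.g.\ $R=\{(0,n):n\in\omega\}$ and $S=\{(n,0):n\in\omega\}$ on $\omega$, so $R\circ S=\{0\}\times\omega$-ish is big while $\wt R$ and $\wt S$ each relate only principal ultrafilters on one side; the exact choice will be pinned down in the write-up. The $\subseteq$-incomparability of $\wt R^{-1}$ and $\wt{R^{-1}}$ is already visible in the table of Theorem~\ref{2.2}: for $R={<}$ we have $\wt{R^{-1}}=\wt{>}$ and $\wt R^{-1}=(\wt{<})^{-1}$, and the remark after that table states $\wt{<}$ and $(\wt{>})^{-1}$ are incomparable, hence so are their converses $(\wt{<})^{-1}$ and $\wt{>}$. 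The main obstacle is the composition identity $(R\circ S)^*=R^*\circ S^*$ — the $\subseteq$-direction requires a genuine ultrafilter-extension argument showing the two filter bases one wants to combine are compatible — together with getting a correct, fully-checked counterexample for $\wt R\circ\wt S\subsetneq\wt{R\circ S}$, where the naive choices tend to collapse.
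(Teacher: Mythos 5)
Your overall route matches the paper's: the easy equalities via the dualities $R^*=((R^{-1})^*)^{-1}$ and $\wt R=-(\wt{-R})$ coming from Lemma~\ref{2.1}, direct ultrafilter computations for the Boolean laws, an intermediate ultrafilter built from two filter bases for $(R\circ S)^*\subseteq R^*\circ S^*$, and counterexamples in the style of the table in Theorem~\ref{2.2}. (Your contrapositive argument for $(R\cup S)^*\supseteq$ via $T_1\cap T_2$, and your appeal to monotonicity for $(R\cap S)^*\subseteq R^*\cap S^*$, are minor and legitimate variants of the paper's computations.) Two of your ingredients, however, do not survive checking.

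First, the counterexample you propose for $\wt R\circ\wt S\ne\wt{R\circ S}$ fails. With $R=\{0\}\times\omega$ and $S=\omega\times\{0\}$ one gets $\wt R=\{0\}\times\scc\omega$ and $\wt S=\scc\omega\times\{0\}$ (where $0$ denotes the principal ultrafilter at $0$), since $\{y:R(x,y)\}$ is $\omega$ or $\emptyset$ according as $x=0$ or not, and $\{y:S(x,y)\}=\{0\}$ for every $x$. Under the paper's convention $(R\circ S)(x,y)\lra(\exists z)(R(x,z)\wedge S(z,y))$ this yields $R\circ S=\{(0,0)\}$ and $\wt R\circ\wt S=\{(0,0)\}=\wt{R\circ S}$; under the opposite convention both sides equal $\scc\omega\times\scc\omega$. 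Either way the inclusion is an equality, so the example shows nothing. The paper's choice is $R={=_X}$ and $S=U_X$: then $\wt R\circ\wt S=X\times\scc X$ (the witness $w$ must equal $u$ and be principal), while $\wt{R\circ S}=\wt{U_X}=\scc X\times\scc X$.

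Second, for $(R\circ S)^*\subseteq R^*\circ S^*$ you correctly locate the crux --- the compatibility of the two filter bases, in the paper's notation $\{SA:A\in u\}$ and $\{R^{-1}B:B\in v\}$ --- but you never supply the argument, and it is the only genuinely nontrivial computation in the theorem. The paper's verification runs: if $SA\cap R^{-1}B=\emptyset$ then $SA\subseteq -R^{-1}B$, and the elementary fact that $R(-R^{-1}B)$ is disjoint from $B$ gives $((R\circ S)A)\cap B=\emptyset$, contradicting $(R\circ S)A\in v$ (which follows from $(u,v)\in(R\circ S)^*$) together with $B\in v$. Without this lemma, or an equivalent, your plan for the composition identity is incomplete. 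The remaining items --- the $\wt{\;}$-side Boolean laws, the three one-way inclusions, the other counterexamples, and the citation of the incomparability of $\wt{<}$ and $(\wt{>})^{-1}$ for the last claim --- are correct and essentially identical to the paper's treatment.
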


\begin{proof}
1. 
The equalities $(R^*)^{-1}=(R^{-1})^*$ 
and $-\wt R=\wt{-R}$ are immediate from
$R^*=((R^{-1})^*)^{-1}$ and $\wt R=-(\wt{-R})$, 
which we have already observed. Also the equality
$\wt{R\cap S}=\wt R\cap\wt S$ is clear as for all
$u,v\in\scc X$,
\begin{align*}
(u,v)\in\wt{R\cap S}
&\;\;\lra\;\;
\bigl\{x:\{y:R(x,y)\wedge S(x,y)\}\in v\bigr\}\in u
\\
&\;\;\lra\;\;
\bigl\{x:\{y:R(x,y)\}\in v\wedge
\{y:S(x,y)\}\in v\bigr\}\in u
\\
&\;\;\lra\;\;
\bigl\{x:\{y:R(x,y)\}\in v\bigr\}\in u
\,\wedge\, 
\bigl\{x:\{y:S(x,y)\}\in v\bigr\}\in u
\end{align*}
(the second equivalence holds since $v$~is a~filter 
while the third since so is~$u$). It follows that 
$\,{}\wt{}\;$~distributes with all Boolean connectives. 
Let us handle two remaining equalities.

Show $(R\cup S)^*=R^*\cup S^*$. We have 
(writting~${\;}^*\,$ as in (ii)~in Lemma~\ref{2.1}) 
\begin{align*}
(u,v)\in(R\cup S)^*
&\;\;\lra\;\;
(\forall A\in u)\;
\{y:(\exists x\in A)\:R(x,y)\,\vee\,S(x,y)\}\in v
\\
&\;\;\lra\;\;
(\forall A\in u)\;
\bigl(\/
\{y:(\exists x\in A)\:R(x,y)\}\in v
\\
&
\quad\qquad\qquad\qquad
\:\vee\:
\{y:(\exists x\in A)\:S(x,y)\}\in v
\/\bigr)
\end{align*}
and 
\begin{align*}
(u,v)\in R^*\cup S^*
\;\;\lra\;\;
&(\forall A\in u)\;
\{y:(\exists x\in A)\:R(x,y)\}\in v
\\
&\;\vee\;
(\forall A\in u)\;
\{y:(\exists x\in A)\:S(x,y)\}\in v,
\end{align*}
whence $(R\cup S)^*\supseteq R^*\cup S^*$ clearly follows.
To prove the converse inclusion, assume that it fails, so 
there exists $A_0\in u$ such that either 
$\{y:(\exists x\in A_0)\,R(x,y)\}\notin v$ or
$\{y:(\exists x\in A_0)\,S(x,y)\}\notin v$; 
assume e.g.~that the first holds. But then for any 
$A\in u$, we have $A_0\cap A\in v$, and therefore,
$\{y:(\exists x\in A_0\cap A)\,S(x,y)\}\in v$, 
and so $\{y:(\exists x\in A)\,S(x,y)\}\in v$
a~fortiori, as required.


Show $(R\circ S)^*=R^*\circ S^*$. The inclusion~$\supseteq$ is clear. 
To handle the converse inclusion, we firstly establish the following 
auxiliary fact: For all $B\subseteq X$,
\begin{align*}
R(-R^{-1}B)\subseteq B
\end{align*}
(where $SA$~means the image of~$A$ under~$S$, 
i.e.~the set $\{y:(\exists x\in A)\,S(x,y)\}$). Indeed, 
$R^{-1}B=\{x:(\exists y\in B)\,R(x,y)\}$, hence,
$-R^{-1}B=\{x:(\forall y\in B)\,\neg\,R(x,y)\}$ whence 
\begin{align*}
R(-R^{-1}B)
&=\{z:(\exists x\in-R^{-1}B)\,R(x,z)\}
\\
&=\{z:(\exists x)\,(\forall y\in B)\,\neg\,R(x,y)\wedge R(x,z)\},
\end{align*}
and therefore,
\begin{align*}
-R(-R^{-1}B)
&=\{z:(\forall x)\,(\exists y\in B)\,R(x,y)\vee\neg\,R(x,z)\}
\\
&=\{z:(\forall x)\,(R(x,z)\to(\exists y\in B)\,R(x,y))\},
\end{align*}
which obviously includes~$B$.

Now pick $u,v\in\scc X$ such that $(u,v)\in(R\circ S)^*$ and 
show $(u,v)\in R^*\circ S^*$. Let 
\begin{align*}
W=\{SA:A\in u\}\cup\{R^{-1}B:B\in v\}.
\end{align*}
Check that $W$~has the finite intersection property. Clearly, 
it suffices to verify that $SA\cap R^{-1}B$ is non-empty for 
any $A\in u$ and $B\in v$. Toward a~contradiction, assume the 
opposite. Then $SA\subseteq-R^{-1}B$ and hence 
\begin{align*}
(R\circ S)A=R(SA)\subseteq R(-R^{-1}B)\subseteq B
\end{align*}
(the latter inclusion was established above), thus showing
that $((R\circ S)A)\cap B$ is empty. However, we have $(R\circ S)A\in v$ 
(by our conditions $A\in u$ and $(u,v)\in(R\circ S)^*$, look at (ii)~in 
Lemma~\ref{2.1}) and $B\in v$, hence $((R\circ S)A)\cap B\in v$; a~contradiction. 
This proves that $W$~has the finite intersection property.

Now extend $W$ to some $w\in\scc X$. We have $(u,w)\in S^*$,
$(v,w)\in (R^{-1})^*$, hence $(w,v)\in R^*$ (since ${\,}^*$ 
and~${\,}^{-1}$ commute, as we have already proved), and 
therefore $(u,v)\in R^*\circ S^*$, as required.


2. 
Let us now prove the claims about the inclusions and non-inclusions.

First, $-\,R^*\subseteq(-R)^*$ follows from
$-((-R)^*)\subseteq R$, which we have already observed.

Moreover, if $R$~is $=_X$ then $-\,R^*$~is 
$(\scc X\!\times\scc X)\setminus\!=_{\scc X}$ while 
$(-R)^*$~is $(\scc X\!\times\scc X)\setminus\!=_{X}$, 
which gives an example of $-\,R^*\ne(-R)^*$.

Next, $(R\cap S)^*\subseteq R^*\cap S^*$ is clear since
\begin{align*}
(u,v)\in(R\cap S)^*
&\;\;\lra\;\;
(\forall A\in u)\;
\{y:(\exists x\in A)\:R(x,y)\wedge S(x,y)\}\in v
\end{align*}
and 
\begin{align*}
(u,v)\in R^*\cap S^*
\;\;\lra\;\;
&(\forall A\in u)\;
\bigl(\/
\{y:(\exists x\in A)\:R(x,y)\}\in v
\\
&\qquad\qquad\qquad\;\wedge\;
\{y:(\exists x\in A)\:S(x,y)\}\in v
\/\bigr)
\\
\;\;\lra\;\;
&(\forall A\in u)\;
\{y:(\exists x\in A)\:R(x,y)
\,\wedge\,
(\exists x\in A)\:S(x,y)\}\in v 
\end{align*}
(the last equivalence holds since $v$~is a~filter).

Moreover, if $R$~is $=_X$ and $S$~is
$(X\!\times X)\setminus\!=_{X}$, then 
$(R\cap S)^*$ is~$\emptyset$ while 
$R^*\cap S^*$ is~$=_{\scc X}\!\!\setminus\!=_X$,
which gives an example of $(R\cap S)^*\ne R^*\cap S^*$.

Show $\wt{R}\circ\wt{S}\subseteq\wt{R\circ S}$.
We have 
\begin{align*}
(u,v)\in\wt{R}\circ\wt{S}
&\;\;\lra\;\;
(\exists w\in\scc X)\;
\bigl(\/
\bigl\{x:\{z:R(x,z)\}\in w\bigr\}\in u
\\
&\qquad\qquad\qquad\qquad\:\wedge\:
\bigl\{z:\{y:S(z,y)\}\in v\bigr\}\in w
\/\bigr)
\end{align*}
and 
\begin{align*}
(u,v)\in\wt{R\circ S}
&\;\;\lra\;\;
\bigl\{x:\{y:
(\exists z)\;
R(x,z)\}\,\wedge\,S(z,y)\}\in v\bigr\}\in u.
\end{align*}
Suppose $(u,v)\in\wt R\circ\wt S$ and pick $w\in\scc X$ 
witnessing this. Let $A$~be the set $\{x:\{z:R(x,z)\}\in w\}$ 
and $B$~the set $\{z:\{y:S(z,y)\}\in v\}$. We have
$A\in u$, $B\in w$, and $\{z:R(x,z)\}\in w$ whenever $x\in A$. 
Hence $\{z:R(x,z)\}\cap B\in w$ whenever $x\in A$, so
\begin{align*}
\bigl\{x:\{z:\{y:
R(x,z)\,\wedge\,S(z,y)
\}\in v\bigr\}\in w\bigr\}\in u,
\end{align*}
whence 
\begin{align*}
\bigl\{x:\{y:(\exists z)\;
R(x,z)\}\,\wedge\,S(z,y)\}\in v\bigr\}\in u
\end{align*}
and thus $(u,v)\in\wt{R\circ S}$ follows.

Moreover, if $R$~is $=_X$ and $S$~is $X\!\times X$, 
then $\wt{R}\circ\wt{S}$ is $X\!\times\scc X$ while 
$\wt{R\circ S}$ is $\scc X\!\times\scc X$, which gives 
an example of $\wt{R}\circ\wt{S}\ne\wt{R\circ S}$.

Finally, if $R$~is a~linear order that is not a~well-order, 
then $\wt{R}^{-1}$ and $\wt{R^{-1}}$ are $\subseteq$-incomparable;
see~\cite{Saveliev(orders)} for details (actually, the fact that 
${}^{-1}$ and~${\,}\wt{\;}{\:}$ do not commute was already pointed out 
in~\cite{Saveliev(orders)}, p.~35). 
\end{proof}

\begin{remark}\label{bigcup}
It can be seen from the proof that in fact the $^*$-extension
distributes w.r.t. arbitrary unions and intersections:
$\bigl(\bigcup_iR_i)^*=\bigcup_iR_i^*$ and 
$\bigl(\bigcap_iR_i)^*\subseteq\bigcup_iR_i^*$.
It follows that it distributes w.r.t.~the transitive 
closure of relations. Besides, it distributes w.r.t.~the 
reflexive closure (since $=_{X}{\!\!\!}^*$~equals~$=_{\scc X}$, 
as pointed above).
As for the $\widetilde{}\;$-extension, the subextension consisting
of $\kappa$-complete ultrafilters distributes w.r.t.~$\kappa$-ary 
Boolean operations. It follows that the subextension consisting of 
$\sigma$-complete ultrafilters also distributes w.r.t.~the transitive 
closure.
\end{remark}


We partly summarize the information about distributivity of the extensions 
w.r.t.~operations of relation algebra in the following table, where 
we mark~$1$ if a~given instance of distributivity holds, and $0$~otherwise.
\begin{align*}
\begin{array}{c|ccccc}
\phantom.\;\;&\,\;\;-&\,\;\cap&\,\;\cup&\;\;\circ&\,\;{}^{-1}
\\
\hline
\rule{0em}{1.5em}
\wt{}\;\;&\;\;1&\;1&\;1&\;0&\;0
\\
{}^*\;\;&\;\;0&\;0&\;1&\;1&\;1
\end{array}
\end{align*}
\rule{0em}{0.5em}
We see that the two extensions have, in a~sense, an opposite character 
w.r.t.~these operations: $\;\wt{}\;$~distributes with Boolean operations
but not with $\circ$ and~${}^{-1}$, while, conversely, ${}^*$~distributes 
with the ``inverse-semigroup" fragment of relation algebra but not its 
``Boolean" fragment.

We note also that both extensions distribute w.r.t.~homomorphisms, 
in the sense that any homomorphism of $(X,R)$ into $(Y,S)$ extends
to a~homomorphism of $(\scc X,\wt R)$ into $(\scc Y,\wt S)$ as well 
as to a~homomorphism of $(\scc X,R^*)$ into $(\scc Y,S^*)$. Actually, 
this is a~partial case of general facts about arbitrary first-order
models; see \cite{Saveliev} and~\cite{Goranko} for the ${\;}\wt{}\;\;$
and ${\,}^*\,$~extensions, respectively.


\subsection*{Characterizing of $R^*$ via projections}

\begin{lemma}\label{2.4}
For any $R\subseteq X\times X$ and $u,v\in\scc X$,
$$
R^*(u,v)
\;\;\lra\;\;
(\forall A\in u)\,
(\forall B\in v)\;
R\cap(A\times B)\ne\emptyset.
$$
\end{lemma}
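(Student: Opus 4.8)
The plan is to prove the equivalence directly from the definition of $R^*$, which states that $R^*(u,v)$ holds iff for every $S \in v$, the set $R^{-1}S = \{x : (\exists y \in S)\, R(x,y)\}$ belongs to $u$. First I would unpack what the right-hand side asserts: $R \cap (A \times B) \ne \emptyset$ means precisely that there exist $x \in A$ and $y \in B$ with $R(x,y)$, i.e.\ that $A \cap R^{-1}B \ne \emptyset$. So the claim is that ``$R^{-1}S \in u$ for all $S \in v$'' is equivalent to ``$A \cap R^{-1}B \ne \emptyset$ for all $A \in u$, $B \in v$''.

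For the forward direction, assume $R^*(u,v)$ and fix $A \in u$, $B \in v$. By definition $R^{-1}B \in u$, and since $u$ is a filter and $A \in u$, we get $A \cap R^{-1}B \in u$; in particular this set is nonempty (as $\emptyset \notin u$), which is exactly $R \cap (A \times B) \ne \emptyset$. For the converse, assume the right-hand side and fix $S \in v$; I must show $R^{-1}S \in u$. Toward a contradiction, suppose $R^{-1}S \notin u$. Since $u$ is an ultrafilter, the complement $-R^{-1}S = \{x : (\forall y \in S)\, \neg R(x,y)\}$ lies in $u$. Taking $A = -R^{-1}S \in u$ and $B = S \in v$, the hypothesis gives some $(x,y) \in R$ with $x \in A$, $y \in B = S$; but $x \in A$ means $\neg R(x,y)$ for all $y \in S$, contradicting $R(x,y)$. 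Hence $R^{-1}S \in u$, and since $S \in v$ was arbitrary, $R^*(u,v)$ holds.

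Both directions are short; the only place one must be slightly careful is in the converse, where the use of $u$ being an \emph{ultrafilter} (not merely a filter) is essential — this is where complementation $R^{-1}S \notin u \iff -R^{-1}S \in u$ is invoked. I do not anticipate any genuine obstacle here; the lemma is essentially a reformulation, and the proof is a routine two-direction argument. One could also phrase the converse non-contradictorily: if $R^{-1}S \notin u$ then $R^{-1}S \cap A' $ could be empty for $A' = -R^{-1}S$, but the contradiction form is cleanest and matches the style of the proof of Lemma~\ref{2.1}.

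\begin{proof}
Note that $R \cap (A \times B) \ne \emptyset$ holds iff there are $x \in A$ and $y \in B$ with $R(x,y)$, i.e.\ iff $A \cap \{x : (\exists y \in B)\, R(x,y)\} \ne \emptyset$.

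$(\to)$ Assume $R^*(u,v)$ and fix $A \in u$ and $B \in v$. By the definition of $R^*$, the set $\{x : (\exists y \in B)\, R(x,y)\}$ belongs to $u$. As $u$ is a filter and $A \in u$, the intersection $A \cap \{x : (\exists y \in B)\, R(x,y)\}$ belongs to $u$, hence is non-empty; by the observation above, $R \cap (A \times B) \ne \emptyset$.

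$(\leftarrow)$ Assume the right-hand side, and fix $S \in v$; we must show $\{x : (\exists y \in S)\, R(x,y)\} \in u$. Suppose not. Since $u$ is an ultrafilter, the complement $A := \{x : (\forall y \in S)\, \neg R(x,y)\}$ lies in $u$. Applying the hypothesis to $A \in u$ and $B := S \in v$, we obtain $x \in A$ and $y \in S$ with $R(x,y)$. But $x \in A$ means $\neg R(x,y')$ for every $y' \in S$, and in particular $\neg R(x,y)$, a contradiction. Therefore $\{x : (\exists y \in S)\, R(x,y)\} \in u$ for every $S \in v$, i.e.\ $R^*(u,v)$.
\end{proof}
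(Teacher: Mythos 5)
Your proof is correct and takes essentially the same route as the paper: a routine two-direction verification using the filter property for the forward implication and ultrafilter-ness for the converse. The only cosmetic difference is that the paper invokes the dual formulation (ii) of $R^*$ from Lemma~\ref{2.1} (working with $RA\in v$ and the ultrafilter property of $v$), whereas you work directly from formulation (i) (with $R^{-1}B\in u$ and the ultrafilter property of $u$); the two are mirror images of each other.
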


\begin{proof}
In the right direction, if $R^*(u,v)$ then by Lemma~\ref{2.1}, 
for all $A\in u$ we have $RA\in v$, and hence for any $B\in v$
we have $B\cap RA\in v$, whence
\begin{align*}
R\cap(A\times B)=
R\cap(A\times(B\cap RA))\ne
\emptyset.
\end{align*}
In the converse direction, pick $A\in u$. Then for all $B\in v$ we have 
$RA\cap B\ne\emptyset$, hence $RA\in v$ since $v$~is an ultrafilter.
\end{proof}

\begin{theorem}\label{2.5}
Let $R\subseteq X\times X$ and $u,v\in\scc X$. Then $R^*(u,v)$ 
iff there exists $w\in\scc(X\times X)$ such that $R\in w$ and 
$$
\{\pr_1(S):S\in w\}=u
\;\;\text{ and }\;\;
\{\pr_2(S):S\in w\}=v.
$$
\end{theorem}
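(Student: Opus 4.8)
I will prove the equivalence in two directions, using Lemma~\ref{2.4} as the characterization of $R^*(u,v)$ via nonempty intersections with rectangles.

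For the backward direction, suppose $w\in\scc(X\times X)$ is an ultrafilter with $R\in w$ whose first and second projections are $u$ and $v$ respectively. To verify $R^*(u,v)$ I will use Lemma~\ref{2.4}: given $A\in u$ and $B\in v$, I need $R\cap(A\times B)\ne\emptyset$. Since $u=\{\pr_1(S):S\in w\}$, the set $A$ is the projection of some member of $w$; but a cleaner route is to observe that $A\times X\in w$ and $X\times B\in w$ — indeed $\pr_1(A\times X)=A\in u$ forces $A\times X\in w$ because $w$ is an ultrafilter and its complement $(X\setminus A)\times X$ projects to $X\setminus A\notin u$, hence is not in $w$. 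Similarly $X\times B\in w$. Then $R\cap(A\times B)=R\cap(A\times X)\cap(X\times B)\in w$, so in particular it is nonempty, as required.

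For the forward direction, assume $R^*(u,v)$. I want to build $w\in\scc(X\times X)$ containing $R$ with the prescribed projections. Consider the family
\[
W=\{R\}\cup\{A\times X:A\in u\}\cup\{X\times B:B\in v\}
\]
(equivalently, the filter generated by $R$ together with all $A\times B$ for $A\in u$, $B\in v$). The key claim is that $W$ has the finite intersection property: a finite intersection reduces to a set of the form $R\cap(A\times B)$ with $A\in u$, $B\in v$, and this is nonempty precisely by Lemma~\ref{2.4} applied to the hypothesis $R^*(u,v)$. So $W$ extends to an ultrafilter $w\in\scc(X\times X)$. By construction $R\in w$. It remains to check the projections. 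For $\pr_1$: if $A\in u$ then $A\times X\in W\subseteq w$, so $A=\pr_1(A\times X)$ lies in $\{\pr_1(S):S\in w\}$; conversely, if $\pr_1(S)\notin u$ for some $S\in w$, then $(X\setminus\pr_1(S))\times X\supseteq$ nothing useful directly, so instead argue: $X\setminus\pr_1(S)\in u$ would give $(X\setminus\pr_1(S))\times X\in w$, and this is disjoint from $S$, contradicting $S\in w$ — hence $\pr_1(S)\in u$ for every $S\in w$. The same argument with $\pr_2$ and $v$ finishes the proof.

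**Main obstacle.** The only subtle point is verifying that the projection families $\{\pr_1(S):S\in w\}$ and $\{\pr_2(S):S\in w\}$ are exactly $u$ and $v$, not merely contained in one direction; the inclusion $u\subseteq\{\pr_1(S):S\in w\}$ is immediate from $A\times X\in w$, while the reverse inclusion needs the ultrafilter property of $u$ together with the fact that $A\times X\in w$ whenever $A\in u$ (so that no member of $w$ can project outside $u$). Everything else is a routine application of the finite intersection property and Lemma~\ref{2.4}, which already packages the real content. I expect the write-up to be short.
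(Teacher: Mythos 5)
Your proof is correct and follows essentially the same route as the paper: both directions reduce to Lemma~\ref{2.4}, the forward direction extends the centered family of sets $R\cap(A\times B)$ (your $W$ generates the same filter as the paper's $D$) to an ultrafilter $w$, and the projection identities are checked by the same disjointness/ultraness argument. The only cosmetic difference is in the ``if'' direction, where you derive $A\times X\in w$ from ultraness of $w$ while the paper uses directly that $A=\pr_1(S_1)$ for some $S_1\in w$; both are fine.
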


\begin{proof}
``If". 
Let $w\in\scc(X\times X)$ be such that $R\in w$ and $(\pr_1w,\pr_2w)=(u,v)$.
Pick any $A\in u$, $B\in v$, and show $R\cap(A\times B)\ne\emptyset$; 
by Lemma~\ref{2.4}, the latter is equivalent to $R^*(u,v)$.

Since $A\in u$ and $\pr_1w=\{\pr(S):S\in w\}=u$, it follows that there is
$S_1\in w$ such that $\pr_1(S_1)=A$. Similarly, there is $S_2\in w$ such 
that $\pr_2(S_2)=B$. Since $R\in w$, also $R\cap S_1\cap S_2\in w$. But 
$S_1\cap S_2\subseteq\pr_1(S_1)\times\pr_2(S_2)=A\times B$, so 
$R\cap(A\times B)\in w$, and thus $R\cap(A\times B)\ne\emptyset$ a~fortiori.

``Only if". 
Suppose $R\cap(A\times B)\ne\emptyset$ for any $A\in u$, $B\in v$, 
which means $R^*(u,v)$ by Lemma~\ref{2.4}, and show that there is
$w\in\scc(X\times X)$ as required. Since 
$
\bigcap_{i<n}(A_i\times B_i)=
\bigl(\bigcap_{i<n}A_i\bigr)
\times
\bigl(\bigcap_{i<n}B_i\bigr)
$
if $n<\omega$, the family 
$$
D=\{R\cap(A\times B):{A\in u}\wedge B\in v\}
$$ 
is centered. Pick any $w\in\scc(X\times X)$ that extends~$D$ and show 
that $w$~is as required.

Clearly, if $A\in u$, $B\in v$, then $A\times B\in w$. Hence, if $S\in w$ 
then $S\cap(A\times B)\in w$, whence $A\cap\pr_1(S)\ne\emptyset$ and 
$B\cap\pr_2(S)\ne\emptyset$. Thus for any $A\in u$ we have 
$A\cap\pr_1(S)\ne\emptyset$ whence it follows that $\pr_1(S)\in u$ as 
$u$~is ultra; and similarly $\pr_2(S)\in v$. This shows 
$(\pr_1w,\pr_2w)=(u,v)$, completing the proof.
\end{proof}


\section{Topological characterizations of $R^*$ and $\wt R$\/}

\subsection*{Characterization of $R^*$ via closure}

We consider $\scc X$ endowed with the standard topology generated 
by sets $\{u\in\scc X:S\in u\}$ for all $S\subseteq X$, and 
$\scc X\!\times\scc X$ endowed with the standard product topology.
The closure and the interior operations of $\scc X\!\times\scc X$
are denoted by $\cl{}$ and $\inter{}$~resp.

\begin{theorem}\label{3.1}
If $R\subseteq X\!\times X$ then $R^*=\cl R$ and $-((-R)^*)=\inter(\cl R)$.
\end{theorem}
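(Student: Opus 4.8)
The plan is to establish the two equalities separately, with the first doing most of the work. For $R^*=\cl R$, recall that a basic open neighborhood of a point $(u,v)\in\scc X\!\times\scc X$ has the form $\{u':A\in u'\}\times\{v':B\in v'\}$ for some $A\in u$, $B\in v$; and a set $W\subseteq X\!\times X$ meets this basic open set (viewed inside $\scc X\!\times\scc X$ via principal ultrafilters) precisely when there is a pair $(x,y)\in W$ with $x\in A$ and $y\in B$, i.e.\ when $W\cap(A\times B)\ne\emptyset$. Hence $(u,v)\in\cl R$ iff every basic neighborhood of $(u,v)$ meets $R$, iff $R\cap(A\times B)\ne\emptyset$ for all $A\in u$ and $B\in v$. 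By Lemma~\ref{2.4}, this last condition is exactly $R^*(u,v)$. So $R^*=\cl R$. The only point needing a word of care is the identification of when a subset of $X\!\times X$ "meets" a basic open set of $\scc X\!\times\scc X$: a principal ultrafilter $(\wh x,\wh y)$ lies in $\{u':A\in u'\}\times\{v':B\in v'\}$ iff $x\in A$ and $y\in B$, and $R$ regarded as a subset of $\scc X\!\times\scc X$ consists precisely of such principal pairs $(\wh x,\wh y)$ with $(x,y)\in R$; this is the standard identification already in force in the paper.

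For the second equality, apply the first one to $-R$ in place of $R$, obtaining $(-R)^*=\cl(-R)$. Now observe that as a subset of $\scc X\!\times\scc X$, the relation $-R$ (the complement of $R$ in $X\!\times X$) has closure equal to the complement in $\scc X\!\times\scc X$ of the interior of $\cl R$; equivalently, $(\scc X\!\times\scc X)\setminus\cl(-R)=\inter((\scc X\!\times\scc X)\setminus\cl(-R)^{\,\mathrm c\mathrm c})$. More directly: for any dense-in-itself zero-dimensional situation, taking complements in $\scc X\!\times\scc X$ turns closure of the "small" set $-R$ into the interior of the complement of that closure. The cleanest route is the identity $-((-R)^*)=(\scc X\!\times\scc X)\setminus\cl(-R)$, and then to check $(\scc X\!\times\scc X)\setminus\cl(-R)=\inter(\cl R)$. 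The key algebraic fact making this work is that $R$ and $-R$, as subsets of $X\!\times X$ sitting inside the dense subset $X\!\times X$ of $\scc X\!\times\scc X$, have closures whose union is all of $\scc X\!\times\scc X$ (since $X\!\times X$ is dense) and whose interiors partition the "interior part"; concretely, $\inter(\cl R)=(\scc X\!\times\scc X)\setminus\cl(\scc X\!\times\scc X\setminus\cl R)$ and one shows $\scc X\!\times\scc X\setminus\cl R$ has the same closure as $-R$, using density of $X\!\times X$ together with the fact that $\cl R\cap\cl(-R)$ has empty interior (no basic open box can meet both $R$ and $-R$ only if\ldots\ — actually a basic box $\{A\ni\}\times\{B\ni\}$ lies in $\cl R$ iff $A\times B\subseteq R$, and such a box avoids $\cl(-R)$ iff $A\times B\cap(-R)=\emptyset$, i.e.\ $A\times B\subseteq R$ again). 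Putting this together gives $-((-R)^*)=\inter(\cl R)$.

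The main obstacle is the second equality: one must be careful that "interior" and "complement of closure of complement" genuinely coincide here, and that the complement is taken in $\scc X\!\times\scc X$ while $R,-R$ live in $X\!\times X$. The safe way is to reduce everything to the characterization of $\cl$ on basic open boxes: a basic open box $O_{A,B}=\{u':A\in u'\}\times\{v':B\in v'\}$ satisfies $O_{A,B}\subseteq\cl R$ iff $A\times B\subseteq R$ (because $O_{A,B}$ contains the principal pairs $(\wh x,\wh y)$ for all $(x,y)\in A\times B$, and it is itself the closure of that set of principal pairs). From this, $\inter(\cl R)=\bigcup\{O_{A,B}:A\times B\subseteq R\}$, and one checks directly that $(u,v)\in\bigcup\{O_{A,B}:A\times B\subseteq R\}$ iff $(\exists A\in u)(\exists B\in v)\,A\times B\subseteq R$ iff $(\exists B\in v)\{x:(\forall y\in B)\,R(x,y)\}\in u$, which is formula (iii) of Lemma~\ref{2.1}, i.e.\ $-((-R)^*)(u,v)$ as recorded in the proof of Theorem~\ref{2.2}. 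This gives the second equality cleanly without ever invoking a general topological lemma about $\inter$ and $\cl$.
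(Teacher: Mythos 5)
Your proof is correct, but it takes a genuinely different route from the paper's on both halves. For $R^*=\cl R$ the paper argues directly from the definition (formula (i) of Lemma~\ref{2.1}): it first shows $R$ is dense in $R^*$, then shows $R^*$ is closed. You instead reduce everything to Lemma~\ref{2.4}, whose right-hand side ($R\cap(A\times B)\ne\emptyset$ for all $A\in u$, $B\in v$) is verbatim the statement that every basic box around $(u,v)$ meets $R$; this makes the first equality essentially a one-line translation, and there is no circularity since Lemma~\ref{2.4} is proved in Section~2 independently of topology. For the second equality the paper deduces $-((-R)^*)=\inter(\cl R)$ from the first equality applied to $-R$ together with the general point-set fact $\cl(A\setminus B)=\cl(\cl A\setminus\cl B)$ when $A$ consists of isolated points; you instead compute $\inter(\cl R)$ explicitly as $\bigcup\{\wt A\times\wt B:A\times B\subseteq R\}$, translate membership in that union into formula (iii) of Lemma~\ref{2.1}, and invoke the identification of (iii) with $-((-R)^*)$ from the proof of Theorem~\ref{2.2}. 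Your version buys an explicit combinatorial description of the interior and avoids any general topological lemma; the paper's version is shorter once the first equality is in hand. Two small editorial points: the middle of your second paragraph (the ``dense-in-itself zero-dimensional'' detour and the sentence that breaks off at ``only if\ldots'') is an abandoned false start and should be deleted, since only your final ``safe way'' argument is the proof; and in the equivalence $O_{A,B}\subseteq\cl R\iff A\times B\subseteq R$ you should say explicitly that the forward direction uses $\cl R\cap(X\times X)=R$, which holds because the principal pairs are isolated points of $\scc X\times\scc X$ --- the same fact the paper uses, just in a smaller dose.
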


\begin{proof}
First we observe that $R$~is dense in~$R^*$. Indeed, let $R^*(u,v)$,
i.e.~$\{x:(\exists y\in S)\,R(x,y)\}\in u$ for each $S\in v$, and 
let $U$~be a~neighborhood of $(u,v)$. W.l.g.~$U$~is given by some 
$A\in u$ and $B\in v$, i.e.~$U=\{(u',v'):A\in u'\text{ and }B\in v'\}.$
Taking $B$ as~$S$, we get $\{x:(\exists y\in B)\,R(x,y)\}\in u$ and so
$A\cap\{x:(\exists y\in B)\,R(x,y)\}\in u$, thus finding some $x\in A$ 
and $y\in B$ such that $R(x,y)$, as required.

We complete the proof of the first formula by verifying that $R^*$~is closed. 
Let $u'$ and~$v'$ be such that $(u',v')\in\cl R^*$, so for all $A\in u'$ and 
$B\in v'$ there exist $u$ and~$v$ such that $A\in u$, $B\in v$, and $R^*(u,v)$. 
To show $(u',v')\in R^*$, pick any $B\in v'$ and check that 
$\{x:(\exists y\in B)\,R(x,y)\}\in u'$. Assume the converse: 
$$
\{x:(\exists y\in B)\,R(x,y)\}\notin u'.
$$ 
Since $u'$~is ultra, this is equivalent to
$$
\{x:(\forall y\in B)\,\neg\,R(x,y)\}\in u'.
$$
Hence there exist $u$ and~$v$ such that 
$\{x:(\forall y\in B)\,\neg\,R(x,y)\}\in u$, $B\in v$, and $R^*(u,v)$.
However, $B\in v$ and $R^*(u,v)$ imply 
$\{x:(\exists y\in B)\,R(x,y)\}\in u$. 
A~contradiction.

The second formula follows from the first one. Indeed, as $-((-R)^*)$ means 
then $(\scc X\!\times\scc X)\setminus\cl((X\!\times X)\setminus R)$ while 
$\inter(\cl R)$ is $(\scc X\!\times\scc X)\setminus\cl((\scc X\!\times\scc X)
\setminus\cl R)$, it suffices to verify
$$
\cl((X\!\times X)\setminus R)=
\cl((\scc X\!\times\scc X)\setminus\cl R).
$$
But the inclusion~$\supseteq$ is a~general fact (we have 
$\cl(A\setminus B)\supseteq\cl(\cl A\setminus\cl B)$ in any topological 
space), while the inclusion~$\subseteq$ follows from the fact that 
$X\!\times X$ consists of points isolated in $\scc X\!\times\scc X$ 
(as easy to see, $\cl(A\setminus B)\subseteq\cl(\cl A\setminus\cl B)$ 
whenever $A$~consists of isolated points).
\end{proof}


\subsection*{Left and right closure operations}

To describe topologically the extension~$\wt{\;}\;$, we provide 
more special topological constructions. If $R\subseteq X\!\times Y$, 
let $R^{(x)}$ and $R_{(y)}$ denote the {\it left\/} and {\it right\/} 
{\it sections\/} of~$R$ given by $x\in X$ and $y\in Y$ resp.:
$$
R^{(x)}=R\cap(\{x\}\times Y) 
\quad\text{ and }\quad
R_{(y)}=R\cap(X\times\{y\}).
$$
As easy to see, $\bigl((R^{-1})^{(y)}\bigr)^{-1}=R_{(y)}$ 
and $\bigl((R^{-1})_{(x)}\bigr)^{-1}=R^{(x)}$.

Given a~topology on the set $X\times Y$, we introduce two closure-like 
operations on its subsets, the {\it left closure\/} and the {\it right 
closure\/}, denoted by $\lcl$ and~$\rcl$ resp.:
$$
\lcl R=
\bigcup_{x\in X}\cl R^{(x)}
\quad\text{ and }\quad
\rcl R=
\bigcup_{y\in Y}\cl R_{(y)}.
$$
Note that in the first union we could take only $x\in\dom R$, and 
in the second, only $y\in\ran R$. Observe also that if all the sets 
$\{x\}\times Y$ are closed in $X\times Y$ (e.g.~if the topology on 
$X\times Y$ is the standard product of a~$T_1$-topology on $X$ and 
an arbitrary topology on~$Y$), then we could replace $\cl R^{(x)}$ 
by $\cl_{\{x\}\times Y}R^{(x)}$; and similarly for the right closure.


\begin{lemma}\label{3.2}
Let $X\!\times Y$ be a~topological space 
and $R,S\subseteq X\!\times Y$.
\\
\rule{0em}{1.5em}%
1. 
$\lcl\emptyset=\emptyset$,\, 
$R\subseteq\lcl R$,\, 
$\lcl(\lcl R)=\lcl R$, and\, 
$\lcl R\cup\lcl S=\lcl(R\cup S)$.
\\
\rule{0em}{1em}%
2. 
$\rcl\emptyset=\emptyset$,\, 
$R\subseteq\rcl R$,\, 
$\rcl(\rcl R)=\rcl R$, and\, 
$\rcl R\cup\rcl S=\rcl(R\cup S)$.
\\
\rule{0em}{1em}%
3. 
$(\lcl R^{-1})^{-1}=\rcl R$\, and\, 
$(\rcl R^{-1})^{-1}=\lcl R$.
\end{lemma}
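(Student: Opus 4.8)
The plan is to verify each of the three parts by unwinding the definitions of $\lcl$ and $\rcl$ in terms of the classical closure operator $\cl$, and then to invoke the standard Kuratowski axioms for $\cl$ on $X\times Y$ together with elementary facts about left sections. The key observation that makes everything routine is that the left sections of $R$ live in pairwise disjoint ``fibers'' $\{x\}\times Y$: for each fixed $x$, $R^{(x)}\subseteq\{x\}\times Y$, so distinct fibers do not interact. This is what lets the single-fiber closures $\cl R^{(x)}$ be assembled into $\lcl R$ without any interference.

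For part 1, the first three identities are immediate: $\lcl\emptyset=\bigcup_x\cl\emptyset=\emptyset$; $R\subseteq\lcl R$ follows from $R^{(x)}\subseteq\cl R^{(x)}$ and $R=\bigcup_x R^{(x)}$; and idempotence reduces to showing $\cl\bigl((\lcl R)^{(x)}\bigr)=\cl R^{(x)}$ for each $x$, for which one notes that $(\lcl R)^{(x)}$ differs from $\cl R^{(x)}$ only by points lying in other fibers $\{x'\}\times Y$ with $x'\ne x$, hence $(\lcl R)^{(x)}=\cl R^{(x)}$ exactly (the fiber $\{x\}\times Y$ meets $\cl R^{(x')}$ in the empty set is not automatic, but $(\lcl R)^{(x)}=\bigl(\bigcup_{x'}\cl R^{(x')}\bigr)\cap(\{x\}\times Y)=\bigcup_{x'}\bigl(\cl R^{(x')}\cap(\{x\}\times Y)\bigr)\supseteq\cl R^{(x')}\cap(\{x\}\times Y)$), and then applying $\cl$ and using $\cl\cl=\cl$. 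For additivity, $\lcl R\cup\lcl S=\bigcup_x(\cl R^{(x)}\cup\cl S^{(x)})=\bigcup_x\cl(R^{(x)}\cup S^{(x)})=\bigcup_x\cl\bigl((R\cup S)^{(x)}\bigr)=\lcl(R\cup S)$, using finite additivity of $\cl$ and $(R\cup S)^{(x)}=R^{(x)}\cup S^{(x)}$.

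For part 2, one can either repeat the argument of part 1 with roles of $x$ and $y$ swapped, or — more economically — derive it from part 1 together with part 3. For part 3, the identity $\bigl((R^{-1})^{(y)}\bigr)^{-1}=R_{(y)}$ recorded just before the lemma, combined with the fact that the map $(a,b)\mapsto(b,a)$ is a homeomorphism of $X\times Y$ onto $Y\times X$ (so it commutes with $\cl$), gives $(\lcl R^{-1})^{-1}=\bigl(\bigcup_y\cl(R^{-1})^{(y)}\bigr)^{-1}=\bigcup_y\bigl(\cl(R^{-1})^{(y)}\bigr)^{-1}=\bigcup_y\cl\bigl((R^{-1})^{(y)}\bigr)^{-1}=\bigcup_y\cl R_{(y)}=\rcl R$; the second identity in part 3 follows by applying the first to $R^{-1}$ in place of $R$ and using $(R^{-1})^{-1}=R$. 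Once part 3 is in hand, part 2 is immediate from part 1 by conjugating with $R\mapsto R^{-1}$.

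I do not anticipate a genuine obstacle here; the only point requiring a moment's care is the idempotence $\lcl(\lcl R)=\lcl R$, where one must be slightly attentive to the fact that $(\lcl R)^{(x)}$, the $x$-section of the union over all fibers, could a priori pick up stray points from other fibers — but since taking the $x$-section intersects with $\{x\}\times Y$, and $\cl R^{(x')}\cap(\{x\}\times Y)$ for $x'\ne x$ contributes nothing new once we take $\cl$ of the whole thing and use that $\cl R^{(x)}$ is already closed, the computation closes. (If the topology on $X\times Y$ is a product of a $T_1$-topology on $X$ with anything on $Y$, as in the intended application, then in fact $\cl R^{(x')}\subseteq\{x'\}\times Y$ and the sections genuinely do not interact, which is the cleanest way to present it.)
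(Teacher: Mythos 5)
Your treatment of additivity and of part~3 coincides with the paper's (the paper likewise computes $\lcl R\cup\lcl S=\bigcup_{x}\cl\bigl(R^{(x)}\cup S^{(x)}\bigr)=\lcl(R\cup S)$ and obtains part~3 from distributivity of ${}^{-1}$ over unions plus the fact that the flip $X\times Y\to Y\times X$ commutes with closure), and deriving part~2 from parts 1 and~3 instead of ``dually'' is a harmless variation. The genuine problem is your argument for the idempotence $\lcl(\lcl R)=\lcl R$. You reduce it to $\cl\bigl((\lcl R)^{(x)}\bigr)=\cl R^{(x)}$ and assert that the stray points which the closures $\cl R^{(x')}$ of other fibers deposit into $\{x\}\times Y$ ``contribute nothing new''. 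That is not true for an arbitrary topology on the set $X\times Y$: the section $(\lcl R)^{(x)}=\bigcup_{x'}\bigl(\cl R^{(x')}\cap(\{x\}\times Y)\bigr)$ is in general an infinite union, each term of which has closure inside $\lcl R$, but the closure of the union need not stay inside $\lcl R$. Concretely, on $Z=\omega\times\omega$ declare $C$ closed iff (a) $(n,0)\in C$ implies $(0,n)\in C$ for all $n\ge1$, and (b) $(0,0)\in C$ whenever $C$ contains infinitely many points of the form $(0,n)$; this family is closed under finite unions and arbitrary intersections, hence is a topology. For $R=\{(n,0):n\ge1\}$ one gets $\cl R^{(n)}=\{(n,0),(0,n)\}$, so $\lcl R=\{(n,0):n\ge1\}\cup\{(0,n):n\ge1\}$, whereas $(\lcl R)^{(0)}=\{(0,n):n\ge1\}$ has closure containing $(0,0)\notin\lcl R$; hence $\lcl(\lcl R)\ne\lcl R$.

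So the identity holds only under the additional hypothesis that each fiber $\{x\}\times Y$ is closed (equivalently, that $\cl R^{(x)}\subseteq\{x\}\times Y$ always), in which case $(\lcl R)^{(x)}=\cl R^{(x)}$ is already closed and idempotence is immediate. That hypothesis is satisfied in every application in the paper (product topologies on $\scc X\times\scc X$ with $\scc X$ Hausdorff), and your final parenthesis correctly identifies it as ``the cleanest way to present it'' --- but it is not merely the cleanest way: without it the claim fails, and your unconditional argument cannot be repaired. (For what it is worth, the paper's own proof dismisses the first three identities of part~1 as ``obvious'' and writes out only additivity and part~3, so it does not resolve this point either; your write-up at least surfaces where the difficulty lies, but then waves it away instead of either adding the closed-fiber hypothesis or finding the counterexample.)
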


\begin{proof}
1. 
The three first statements are obvious, for the fourth 
we argue as follows:
\begin{align*}
\lcl R\cup\lcl S
&=\Bigl(\,\bigcup_{x\in X}\cl R^{(x)}\Bigr)
\,\cup\,\Bigl(\,\bigcup_{x'\in X}\cl S^{(x')}\Bigr)\,
\\
&=\bigcup_{x,x'\in X}\bigl(\cl R^{(x)}\cup\cl S^{(x')}\bigr)
=\bigcup_{x\in X}\bigl(\cl R^{(x)}\cup\cl S^{(x)}\bigr)
\\
&=\bigcup_{x\in X}\cl\bigl(R^{(x)}\cup S^{(x)}\bigr)
=\bigcup_{x\in X}\cl(R\cup S)^{(x)}
=\lcl(R\cup S).
\end{align*}

2. 
Dually.

3. 
As ${}^{-1}$~distributes w.r.t.~arbitrary unions, we have:
\begin{align*}
(\lcl R^{-1})^{-1}
&=\Bigl(\,\bigcup_{y\in Y}\cl\bigl((R^{-1})^{(y)}\bigr)\Bigr)^{-1}
=\bigcup_{y\in Y}\bigl(\cl\bigl((R^{-1})^{(y)}\bigr)\bigr)^{-1}
\\
&=\bigcup_{y\in Y}\cl\bigl((R^{-1})^{(y)}\bigr)^{-1}
=\bigcup_{y\in Y}\cl R_{(y)}
=\rcl R.
\end{align*}
Here ${}^{-1}$ and~$\cl{}$ commute in the sense that the implied 
topology on $Y\!\times X$ consists of sets~$S$ such that $S^{-1}$ 
belong to the considered topology on $X\!\times Y$.
The second formula is proved dually.
\end{proof}


Clause~1 of Lemma~\ref{3.2} states that the left closure is indeed a~closure 
operation. Given a~topology~$\tau$ on $X\!\times Y$, we let $\tau_\lcl$ 
to denote the topology defined by~$\lcl$, i.e.~with closed sets of form 
$S=\lcl S$. Clearly, $\tau_\lcl$~refines~$\tau$ and, in general, is stronger. 
Moreover, if all sets $\{x\}\!\times Y$ are $\tau$-closed (as happens e.g.~if
$\tau$~is the standard product of a~$T_1$-topology on~$X$ and a~topology on~$Y$), 
then they are $\tau_\lcl$-clopen, so $X\!\times Y$ endowed with~$\tau_\lcl$ 
is their topological sum.

Clause~2 states that the right closure also defines a~topology~$\tau_\rcl$ 
with the dual properties. The topology generated by $\tau_\lcl\cup\tau_\rcl$ 
is discrete in natural cases (e.g.~if $\tau$~is the product of $T_1$-topologies 
on $X$ and~$Y$) while $\tau_\lcl\cap\tau_\rcl$ refines~$\tau$ and, in general, 
still is stronger (e.g.~if $\mathbb R\!\times\mathbb R$ is the real plane with 
its usual topology and $S$~is $=_\mathbb R\!\setminus\{(0,0)\}$, then 
$S=\lcl S=\rcl S\ne\cl S$).

We shall say that $R\subseteq X\!\times Y$ is {\it left closed on\/} 
$A\subseteq X$ iff the left sections~$R^{(x)}$ are closed for all $x\in A$, 
and {\it right closed on\/} $B\subseteq Y$ iff the right sections~$R_{(x)}$ 
are closed for all $y\in B$. The terms {\it left open on\/} a~set, {\it right 
clopen on\/} a~set, etc.~have the expected meaning.
E.g.~$R\subseteq X\!\times Y$ is right open on~$Y$ iff $R\in\tau_\rcl$.
(In fact, the right versions of these concepts were defined 
in~\cite{Saveliev} for arbitrary $n$-ary relations.)


\begin{proposition}\label{prop lcl rcl}
In general, $\rcl\circ\lcl$ and $\lcl\circ\rcl$ are not closure operators. 
Moreover, there exists $R\subseteq\omega\times\omega$ such that
$$
\lcl(\rcl(\lcl R))\ne\rcl(\lcl R)
\quad\text{ and }\quad
\rcl(\lcl(\rcl R))\ne\lcl(\rcl R)
$$
in the space $\scc\omega\times\scc\omega$ with the product topology.
\end{proposition}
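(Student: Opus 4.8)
The plan is to construct a single relation $R\subseteq\omega\times\omega$ for which iterating left and right closure in the space $\scc\omega\times\scc\omega$ keeps producing genuinely new pairs of ultrafilters, and then package the two displayed inequalities as instances of this behaviour (the second being obtained from the first by passing to $R^{-1}$, using clause~3 of Lemma~\ref{3.2}). The key point to exploit is that in $\scc\omega\times\scc\omega$ a left section $R^{(x)}=\{x\}\times\{y:R(x,y)\}$ is closed precisely when $\{y:R(x,y)\}$ is finite or cofinite relative to the ultrafilters one cares about; more precisely, $\cl R^{(x)}=\{x\}\times\cl\{y:R(x,y)\}$, so $\lcl R$ is built from $R$ by ``completing each row to its closure in the second coordinate,'' and dually $\rcl$ completes each column in the first coordinate. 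So I would look for an $R$ whose rows are infinite and coinfinite (hence not closed, so $\lcl R$ adds new points with principal first coordinate and nonprincipal second coordinate), whose columns after applying $\lcl$ become infinite and coinfinite in the first coordinate (so $\rcl(\lcl R)$ adds points with nonprincipal first coordinate), and where among those nonprincipal-first-coordinate points there is a row (a left section over a nonprincipal $u$) that is again infinite and coinfinite — so that one more application of $\lcl$ strictly enlarges the set. The cleanest candidate is something like $R=\{(m,n):n<m\}$ or a thinned version thereof, arranged so that the relevant sections are never finite and never cofinite.

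The first step is to compute $\lcl R$: for each $m$ the row is $\{0,1,\dots,m-1\}$, which is finite, hence closed, so naively $\lcl R=R$ — which is not good enough, so I would instead take $R=\{(m,n):n\ge m\}$ (each row $\{n:n\ge m\}$ is cofinite, hence closed again!). The lesson is that I must use rows that are simultaneously infinite and coinfinite; e.g. let $R(m,n)$ hold iff the $m$-th bit of $n$ (in binary) is $1$, or more robustly fix a partition of $\omega$ into infinite sets $A_0,A_1,\dots$ and let $R(m,n)$ iff $n\in A_0\cup\dots\cup A_m$ with each $A_i$ and its complement infinite — then row $m$ is infinite and coinfinite, so $\cl R^{(m)}$ properly contains $R^{(m)}$, adding all pairs $(m,v)$ with $v$ a nonprincipal ultrafilter containing $A_0\cup\dots\cup A_m$. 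I would then verify that in $\lcl R$ the columns (fixing such a $v$) are of the form $\{m:A_0\cup\dots\cup A_m\in v\}$, which is an \emph{up-set} in $\omega$, hence either finite or cofinite — again dangerously closed. The real obstruction, and the part I expect to be delicate, is exactly this: na\"ive monotone constructions make sections automatically closed, so I must break monotonicity. I would therefore build $R$ by a back-and-forth / bookkeeping construction over $\omega$ stages, at stage $k$ ensuring that one more specified section (a row over some enumerated ultrafilter, or a column over some enumerated ultrafilter) is made infinite-and-coinfinite while only finitely many earlier commitments are touched, so that all the sections that matter — the rows of $R$, the columns of $\lcl R$, the rows of $\rcl(\lcl R)$ — fail to be closed.

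Once such an $R$ is in hand, the argument concludes quickly. By construction $\lcl R\subseteq\rcl(\lcl R)$ with some pair $(u,v)$, $u$ nonprincipal, lying in $\rcl(\lcl R)$ such that the left section $(\rcl(\lcl R))^{(u)}$ is infinite and coinfinite in the second coordinate; hence its closure in $\scc\omega$ strictly contains it, so $\lcl(\rcl(\lcl R))\supsetneq\rcl(\lcl R)$, giving the first displayed inequality. For the second, observe that $\rcl(\lcl(\rcl R))=\bigl(\lcl(\rcl(\lcl R^{-1}))\bigr)^{-1}$ and $\lcl(\rcl R)=\bigl(\rcl(\lcl R^{-1})\bigr)^{-1}$ by repeated use of clause~3 of Lemma~\ref{3.2}, so applying the first inequality to $R^{-1}$ in place of $R$ (the construction is symmetric in the two coordinates, so $R^{-1}$ works equally well, or one simply runs the bookkeeping construction to handle both $R$ and $R^{-1}$ simultaneously) yields $\rcl(\lcl(\rcl R))\ne\lcl(\rcl R)$. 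Finally, the opening sentence of the Proposition — that $\rcl\circ\lcl$ and $\lcl\circ\rcl$ are not closure operators in general — is an immediate consequence: a closure operator $c$ satisfies $c\circ c=c$, whereas here $\lcl$ applied to the ``$\rcl\circ\lcl$-closed'' set $\rcl(\lcl R)$ changes it, and a fortiori $(\rcl\circ\lcl)(\rcl(\lcl R))\ne\rcl(\lcl R)$, so idempotence fails.
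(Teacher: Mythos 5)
Your concluding steps are sound (deriving the failure of idempotence from extensiveness of $\lcl$ and $\rcl$, and reducing the second displayed inequality to the first via clause~3 of Lemma~\ref{3.2} applied to $R^{-1}$), but the heart of the proof --- producing the witness $R$ --- rests on a false topological premise and is never actually carried out. You repeatedly assert that a cofinite (or ``finite or cofinite'') subset of $\omega$ is closed in $\scc\omega$: this is what makes you discard $\{(m,n):n\ge m\}$ (``each row is cofinite, hence closed again!'') and what makes you call the up-set columns $\{m:A_0\cup\dots\cup A_m\in v\}$ ``dangerously closed''. In fact a set $A\subseteq\omega$ is closed in $\scc\omega$ if and only if it is \emph{finite}, since $\cl_{\scc\omega}A=\wt A=\{u:A\in u\}$ contains every nonprincipal ultrafilter as soon as $A$ is infinite. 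So there is no need to ``break monotonicity'': the very examples you reject already work, and this is exactly what the paper does with $R={\ge}$ (i.e.\ $R(m,n)\lra m\ge n$). Its rows are finite, hence closed, so $\lcl({\ge})={\ge}$; its columns $\omega\setminus m$ are cofinite, hence \emph{not} closed, so $\rcl({\ge})$ acquires every pair $(u,m)$ with $u$ nonprincipal; and the left section of $\rcl({\ge})$ over any nonprincipal $u$ is then $\{u\}\times\omega$, again not closed, so $\lcl(\rcl({\ge}))\supsetneq\rcl({\ge})$. A single witness for both inequalities is obtained by taking the disjoint union of $\ge$ on the evens and $\le$ on the odds.

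The second gap is that, having (wrongly) convinced yourself that all monotone examples fail, you replace the construction by an unexecuted ``back-and-forth / bookkeeping construction over $\omega$ stages'' that handles ``enumerated ultrafilters''. As stated this cannot work: there are $2^{2^{\aleph_0}}$ ultrafilters over $\omega$, so they cannot be enumerated in $\omega$ stages, and you give no argument that controlling only countably many sections suffices (it does --- a single non-closed section at each level is enough --- but that needs to be said and arranged). As it stands, the proposal identifies the right overall shape of the argument but neither exhibits a concrete $R$ nor verifies either displayed inequality for one.
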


\begin{proof}
Let us first show that the usual order~$\ge$ on~$\omega$ satisfies the first of 
the two inequalities: $\lcl(\rcl(\lcl({\ge})))\ne\rcl(\lcl({\ge}))$. We have:
\begin{align*}
\lcl({\ge})=
\bigcup_{u\in\scc\omega}\cl\bigl({\ge}^{(u)}\bigr)=
\bigcup_{u\in\scc\omega}\cl\bigl({\ge}\,\cap(\!\{u\}\!\times\scc\omega)\bigr)=
\bigcup_{m\in\omega}\cl\bigl({\ge}\,\cap(\!\{m\}\!\times\omega)\bigr).
\end{align*}
Since for any fixed $m\in\omega$ the set ${\ge}\,\cap\{m\}\!\times\omega=
\{(m,n):m\ge n\}$ is finite, it is closed. Hence the latter union is equal to
$\bigcup_{m\in\omega}\{(m,n):m\ge n\}={\ge}$. Thus we have
$\lcl({\ge})={\ge}$ and so $\rcl(\lcl({\ge}))=\rcl({\ge})$.
Next, 
\begin{align*}
\rcl({\ge})
&=
\bigcup_{v\in\scc\omega}\cl\bigl({\ge}_{(v)}\bigr)=
\bigcup_{v\in\scc\omega}\cl\bigl({\ge}\,\cap(\scc\omega\!\times\!\{u\})\bigr)=
\bigcup_{m\in\omega}\cl\bigl({\ge}\,\cap(\omega\!\times\!\{m\})\bigr)
\\
&=
\bigcup_{m\in\omega}\cl\bigl((\omega\setminus m)\!\times\!\{m\}\bigr)=
\bigcup_{m\in\omega}\scc(\omega\setminus m)\!\times\!\{m\}.
\end{align*}
Finally,
\begin{align*}
\lcl(\rcl({\ge}))
&=
\bigcup_{u\in\scc\omega}\cl\bigl(\rcl({\ge})^{(u)}\bigr)=
\bigcup_{u\in\scc\omega}\cl\bigl(\rcl({\ge})\cap(\!\{u\}\!\times\scc\omega)\bigr)
\\
&=
\bigcup_{u\in\scc\omega}\cl\Bigl(
\Bigl(\bigcup_{m\in\omega}\scc(\omega\setminus m)\!\times\!\{m\}\Bigr)
\cap(\!\{u\}\!\times\scc\omega)\Bigr).
\end{align*}
Note that for any fixed $u\in\scc\omega$,
$$
\Bigl(\bigcup_{m\in\omega}\scc(\omega\setminus m)\!\times\!\{m\}\Bigr)
\cap(\!\{u\}\!\times\scc\omega)=
\{(u,m):m\in\omega\wedge u\notin m\},
$$
and this set equals $\{(n,m):m\le n\}$ if $u$~is principal, and 
$\{(u,m):m\in\omega\}=\{u\}\!\times\omega$ otherwise. Hence, the closure
of this set equals $\{(n,m):m\le n\}$ if $u$~is principal, and 
$\{u\}\!\times\scc\omega$ otherwise. Therefore, the union of these sets
for all $u\in\scc\omega$ is
\begin{align*}
\bigcup_{u\in\scc\omega}\cl\{(u,m):m\in\omega\wedge u\notin m\}=
{\ge}\,\cup((\scc\omega\setminus\omega)\!\times\scc\omega).
\end{align*}
The latter set properly includes $\rcl(\ge)$, thus showing
$\lcl(\rcl(\ge))\ne\rcl(\ge)$, as reqired.

Furthermore, a~dual argument shows that $\rcl(\le)={\le}$ and 
$\rcl(\lcl(\le))\ne\lcl(\le)$. Now, let $A$ and~$B$ be two disjoint 
copies of the discrete space~$\omega$, e.g.~consisting of even and odd 
natural numbers resp., and let $R$~be the union of ${\ge}$ on~$A$ and ${\le}$ 
on~$B$. Then $R$~satisfies both required inequalities, completing the proof.
\end{proof}

This observation can be improved in two ways. First, $\omega$~can be replaced 
with any infinite discrete~$X$. Second, it can be shown that for many 
ordinals~$\alpha$, even $\alpha$th iterations of $\rcl\circ\lcl$ and 
$\lcl\circ\rcl$ are not closure operators. We leave this for the reader.


\subsection*{Characterization of $\wt R$ via left and right closures}

Now we provide a~topological characterization of the extension~$\,\wt{}\;\,$ 
by using the same product topology on $\scc X\times\scc X$ as for the 
extension~${}^*\,$ but in terms of the left and right closures.

\begin{theorem}\label{thm lcl rcl}
If $R\subseteq X\!\times X$ then
$\wt R=\rcl(\lcl R)$ and $(\wt{R^{-1}})^{-1}=\lcl(\rcl R)$.
\end{theorem}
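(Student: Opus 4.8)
The plan is to prove $\wt R=\rcl(\lcl R)$ directly from the definitions, unwinding both sides as conditions on a pair $(u,v)\in\scc X\times\scc X$; the second identity $(\wt{R^{-1}})^{-1}=\lcl(\rcl R)$ then follows by the symmetry observed in Lemma~\ref{3.2}(3) together with the fact that $\wt{R^{-1}}$ is obtained from $\wt R$ by interchanging the roles of the two coordinates and replacing $R$ with $R^{-1}$ (this was already used in the proof of Theorem~\ref{2.2}). So the entire work is in the first identity.

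\textbf{Computing $\lcl R$.} First I would compute $\lcl R=\bigcup_{x\in X}\cl R^{(x)}$, where $R^{(x)}=\{x\}\times\{y:R(x,y)\}\subseteq X\times X\subseteq\scc X\times\scc X$. Since $x$ ranges over principal ultrafilters and $\{x\}$ is clopen in $\scc X$, the closure $\cl R^{(x)}$ sits inside $\{x\}\times\scc X$ and equals $\{x\}\times\cl\{y:R(x,y)\}=\{x\}\times\{v\in\scc X:\{y:R(x,y)\}\in v\}$, using the standard description of closures in $\scc X$ (the closure of $S\subseteq X$ is $\{v:S\in v\}$). Hence
$$
\lcl R=\{(x,v)\in X\times\scc X:\{y:R(x,y)\}\in v\}.
$$

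\textbf{Computing $\rcl(\lcl R)$.} Next I would apply $\rcl$, i.e.\ $\rcl(\lcl R)=\bigcup_{v\in\scc X}\cl\bigl((\lcl R)_{(v)}\bigr)$. By the computation above, $(\lcl R)_{(v)}=\{(x,v):x\in X\wedge\{y:R(x,y)\}\in v\}$, which lies in $X\times\{v\}$. Its closure in $\scc X\times\scc X$ is $\cl\{x\in X:\{y:R(x,y)\}\in v\}\times\{v\}=\{u\in\scc X:\{x:\{y:R(x,y)\}\in v\}\in u\}\times\{v\}$. Taking the union over all $v\in\scc X$ gives exactly
$$
\rcl(\lcl R)=\bigl\{(u,v):\{x:\{y:R(x,y)\}\in v\}\in u\bigr\}=\wt R
$$
by the defining formula (v) of $\wt R$. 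The one point demanding a little care — and the main (mild) obstacle — is justifying that the closure of a subset of the ``slice'' $\{x\}\times\scc X$ (resp.\ $X\times\{v\}$) is computed coordinatewise, i.e.\ $\cl(\{x\}\times T)=\{x\}\times\cl T$ and $\cl(T'\times\{v\})=\cl T'\times\{v\}$ for $T\subseteq\scc X$, $T'\subseteq X$; this follows since $\{x\}$ is clopen in $\scc X$ in the first case, and from the product-topology description of basic open sets together with $v$ being a single point in the second. Everything else is a routine substitution of the standard formula $\cl S=\{w\in\scc X:S\in w\}$ for $S\subseteq X$.

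\textbf{The second identity.} Finally, for $(\wt{R^{-1}})^{-1}=\lcl(\rcl R)$: by Lemma~\ref{3.2}(3), $\lcl(\rcl R)=\lcl\bigl((\lcl(R^{-1}))^{-1}\bigr)=\bigl(\rcl(\lcl(R^{-1}))\bigr)^{-1}$, which by the first identity applied to $R^{-1}$ equals $(\wt{R^{-1}})^{-1}$. This completes the proof.
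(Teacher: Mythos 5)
Your proposal is correct and matches the paper's route: the derivation of $(\wt{R^{-1}})^{-1}=\lcl(\rcl R)$ from the first identity via Lemma~\ref{3.2}(3) is exactly the paper's argument, and your coordinatewise computation of $\lcl R=\{(x,v):\{y:R(x,y)\}\in v\}$ followed by $\rcl$ is the standard unwinding of the first identity, which the paper itself does not spell out but delegates to a citation of~\cite{Saveliev}. The only point worth stating explicitly is the one you already flag, namely $\cl(A\times B)=\cl A\times\cl B$ on the relevant slices (using that $\{x\}$ is clopen and $\{v\}$ is closed in $\scc X$), so nothing is missing.
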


\begin{proof}
The first formula was in fact proved in~\cite{Saveliev} (see Section~3 there) 
though without using of the terms ``left and right closures".
The second formula follows from the first one:
$$
(\wt{R^{-1}})^{-1}=
(\rcl(\lcl R^{-1}))^{-1}=
\lcl((\lcl R^{-1})^{-1})=
\lcl(\rcl R).
$$
Two last equalities here twice use clause~3 of Lemma~\ref{3.2}.
\end{proof}

Clearly, the $\wt{\;\;}\,$-extension can be expressed via 
only the left closure (of only the right closure) combined 
with the inversion:

\begin{corollary}\label{coro lcl rcl}
If $R\subseteq X\!\times X$ then
\begin{align*}
\wt R&=
(\lcl(\lcl R)^{-1})^{-1}= 
\rcl((\rcl R^{-1})^{-1}),
\\
(\wt{R^{-1}})^{-1}&=
\lcl((\lcl R^{-1})^{-1})=
(\rcl(\rcl R)^{-1})^{-1}.
\end{align*}
\end{corollary}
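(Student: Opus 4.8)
The plan is to derive all four identities mechanically from Theorem~\ref{thm lcl rcl} together with clause~3 of Lemma~\ref{3.2}, treating $\wt R=\rcl(\lcl R)$ and $(\wt{R^{-1}})^{-1}=\lcl(\rcl R)$ as the two basic facts and the relation $(\lcl T^{-1})^{-1}=\rcl T$, $(\rcl T^{-1})^{-1}=\lcl T$ (valid for any $T\subseteq X\!\times X$ in the ambient product topology) as the only rewriting rule. No new topology needs to be examined: the whole corollary is a formal consequence of the two already-proved theorems.

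First I would establish the first identity in the top line. Starting from Theorem~\ref{thm lcl rcl}, $\wt R=\rcl(\lcl R)$; applying clause~3 of Lemma~\ref{3.2} in the form $\rcl T=(\lcl T^{-1})^{-1}$ with $T=\lcl R$ gives $\wt R=(\lcl((\lcl R)^{-1}))^{-1}$, which is exactly $(\lcl(\lcl R)^{-1})^{-1}$ as written. For the second expression in that line, apply clause~3 in the other form $\lcl R=(\rcl R^{-1})^{-1}$ to rewrite the \emph{inner} occurrence: $\wt R=\rcl((\rcl R^{-1})^{-1})$. Thus both displayed expressions for $\wt R$ come from a single substitution, one applied to the outer operator, one to the inner one.

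Next I would do the bottom line for $(\wt{R^{-1}})^{-1}$, starting from the second formula of Theorem~\ref{thm lcl rcl}, namely $(\wt{R^{-1}})^{-1}=\lcl(\rcl R)$. Using $\lcl T=(\rcl T^{-1})^{-1}$ with $T=\rcl R$ yields $(\wt{R^{-1}})^{-1}=(\rcl((\rcl R)^{-1}))^{-1}=(\rcl(\rcl R)^{-1})^{-1}$; using instead $\rcl R=(\lcl R^{-1})^{-1}$ on the inner operator yields $(\wt{R^{-1}})^{-1}=\lcl((\lcl R^{-1})^{-1})$. So all four claimed equalities are obtained by at most one application of Lemma~\ref{3.2}(3) to one of the two Theorem~\ref{thm lcl rcl} identities. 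There is essentially no obstacle here; the only point requiring a word of care is that clause~3 of Lemma~\ref{3.2} is stated for a subset of $X\!\times Y$ under \emph{some} fixed topology, and one must note, as in the proof of Theorem~\ref{thm lcl rcl}, that the topology on $\scc X\!\times\scc X$ used throughout is symmetric under the coordinate swap, so that $\cl$ and ${}^{-1}$ commute and Lemma~\ref{3.2}(3) applies to each intermediate set $\lcl R$, $\rcl R$, $(\lcl R^{-1})^{-1}$, etc. Once that is observed, the computation is a routine substitution and the corollary follows.
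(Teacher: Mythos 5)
Your proposal is correct and follows exactly the paper's route: the paper proves the corollary by citing Theorem~\ref{thm lcl rcl} together with Lemma~\ref{3.2}, and your substitutions (applying clause~3 of Lemma~\ref{3.2} once, either to the outer or to the inner operator in each of the two identities of Theorem~\ref{thm lcl rcl}) are precisely the intended computation, just written out in full.
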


\begin{proof}
Theorem~\ref{thm lcl rcl} and Lemma~\ref{3.2}.
\end{proof}


More topological properties of both types of extensions:

\begin{proposition}
Let $R\subseteq X\!\times X$.
\\
\rule{0em}{1.5em}%
1.
$R^*$~is left and right clopen on~$\scc X$; in general, 
it is not open in $\scc X\!\times\scc X$.
\\
\rule{0em}{1em}%
2.
$-((-R)^*)$~is left and right clopen on~$\scc X$; in general, 
it is not closed in $\scc X\!\times\scc X$.
\\
\rule{0em}{1em}%
3.
$\wt R$~is right clopen on~$\scc X$; it is also left clopen on~$X$ 
but, in general, neither left closed nor left open on~$\scc X$.
\\
\rule{0em}{1em}%
4.
$(\wt{R^{-1}})^{-1}$~is left clopen on~$\scc X$; it is also right clopen 
on~$X$ but, in general, neither left closed nor left open on~$\scc X$.
\end{proposition}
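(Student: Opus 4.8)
The plan is to verify each of the four clauses by directly computing the relevant left and right sections and appealing to the topological characterizations already in hand, namely $R^*=\cl R$ (Theorem~\ref{3.1}) and $\wt R=\rcl(\lcl R)$, $(\wt{R^{-1}})^{-1}=\lcl(\rcl R)$ (Theorem~\ref{thm lcl rcl}), together with Lemma~\ref{2.1}. For clause~1, I would fix $u\in\scc X$ and analyze the left section $(R^*)^{(u)}=\{u\}\times\{v:R^*(u,v)\}$; by Lemma~\ref{2.1}(i), $R^*(u,v)$ holds iff $v$ contains the filter generated by $\{RA:A\in u\}$, so the fibre $\{v:R^*(u,v)\}$ is exactly the set of ultrafilters extending a fixed filter, which is closed; and it is also open because its complement $\{v:\neg R^*(u,v)\}=\{v:(-R)^*{}'\dots\}$ is, by the equivalence (i)$\,\lra\,$(v)'s negation, again of the same form (the fibre of $-((-R)^*)$ over $u$), hence closed. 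This gives left clopenness on $\scc X$; right clopenness follows by the symmetry $R^*=((R^{-1})^*)^{-1}$, interchanging the roles of the two coordinates. For the non-openness of $R^*$ in the product, I would reuse the example $R={=_X}$, where $R^*={=_{\scc X}}$ is the diagonal, which is closed but not open in $\scc X\times\scc X$ since $\scc X$ is not discrete.

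Clause~2 is handled by the same bookkeeping applied to $-((-R)^*)$: its left section over $u$ is the complement in $\{u\}\times\scc X$ of the left section of $(-R)^*$ over $u$, and we just argued that the latter is clopen, so $-((-R)^*)$ is again left clopen on $\scc X$, and right clopen by the analogous coordinate symmetry $-((-R)^*)=-((-R^{-1})^*)^{-1}$ noted in the proof of Theorem~\ref{2.2}. For the failure of closedness, the example $R={=_X}$ again works: $-((-{=_X})^*)={=_X}$ viewed inside $\scc X\times\scc X$ (as computed in the table in Theorem~\ref{2.2}), which is not closed there because its closure is $\cl({=_X})={=_{\scc X}}\supsetneq{=_X}$.

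For clause~3 I would work from $\wt R=\rcl(\lcl R)$. The key point is that $\rcl$ is literally a union of closures of right sections, so $(\wt R)_{(v)}=\cl\bigl((\lcl R)_{(v)}\bigr)$ is closed by construction for every $v\in\scc X$ — this gives ``right closed on $\scc X$''. Right openness comes by running the same argument with $R$ replaced by $-R$ and using $\wt{-R}=-\wt R$ (Theorem~\ref{2.3}): the right section of $-\wt R=\wt{-R}$ over $v$ is closed, hence the right section of $\wt R$ over $v$ is open. For ``left clopen on $X$'', I would note that for principal $u=\{x\}$ the left section $(\wt R)^{(x)}$ describes the ultrafilters $v$ with $\{y:R(x,y)\}\in v$, i.e. precisely the clopen set $\cl\{(x,y):R(x,y)\}$ in $\{x\}\times\scc X$; its complement is governed by $\{y:\neg R(x,y)\}\in v$, also clopen. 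For the negative assertions — that $\wt R$ is in general neither left closed nor left open on $\scc X$ — I would invoke Proposition~\ref{prop lcl rcl}: if $\wt R=\rcl(\lcl R)$ were always left closed, then $\lcl(\wt R)=\lcl(\rcl(\lcl R))$ would equal $\wt R=\rcl(\lcl R)$, contradicting the explicit $R\subseteq\omega\times\omega$ produced there; the non-left-openness is obtained dually by passing to $-R$ and again citing that Proposition. Clause~4 is then immediate by applying clause~3 to $R^{-1}$ and conjugating by ${}^{-1}$, which swaps left and right and uses $(\wt{R^{-1}})^{-1}=\lcl(\rcl R)$.

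The main obstacle I anticipate is clause~3's negative half: one must be careful that the counterexample of Proposition~\ref{prop lcl rcl} is genuinely about $\wt R$ and not merely about the abstract operators $\lcl,\rcl$. The clean way is exactly the reduction above — ``$\wt R$ left closed for all $R$'' would force $\lcl\circ\rcl\circ\lcl=\rcl\circ\lcl$ identically on subsets of $\scc\omega\times\scc\omega$, which that Proposition explicitly refutes — so no new computation is needed, only the observation that every relation of the form $\rcl(\lcl R)$ is $\wt R$ for that same $R$. Everything else is a routine matter of identifying the sections with sets of ultrafilters extending a fixed filter (closed) whose complements are of the same shape (open), which is the standard clopen-basis behavior of $\scc X$.
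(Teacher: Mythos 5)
Your handling of clause 3 is sound: the right section of $\wt R$ over $v$ is the basic clopen set $\wt{A_v}\times\{v\}$ with $A_v=\{x:R(x)\in v\}$, the left section over a principal $\hat x$ is $\{\hat x\}\times\wt{R(x)}$, and the negative half via Proposition~\ref{prop lcl rcl} (using $\wt{-R}=-\wt R$ for non-openness) is a legitimate alternative to the paper, which instead just points to the sections of $\wt{<}$ and $(\wt{>})^{-1}$ from the examples of Theorem~\ref{2.2}. Clause 4 by conjugation and all three counterexamples ($R={=_X}$ for non-openness of $R^*$ and non-closedness of $-((-R)^*)$, the latter also witnessed in the paper by $U_X\setminus{=_X}$) are fine. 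Note that the paper's own proof consists of nothing but these counterexamples; the positive halves are left to the reader, so your proposal is doing genuinely more work there.

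That extra work, however, contains a genuine gap in clause 1, which then propagates into clause 2. You argue that the fibre $\{v:R^*(u,v)\}=\bigcap_{A\in u}\wt{RA}$ is open because its complement is ``the fibre of $-((-R)^*)$ over $u$''. It is not: the complement of the fibre of $R^*$ over $u$ is the fibre of $-(R^*)$ over $u$, whereas the fibre of $-((-R)^*)$ over $u$ is the complement of the fibre of $(-R)^*$ over $u$; identifying the two amounts to using $-(R^*)=(-R)^*$, an equality the paper explicitly refutes with the very relation $R={=_X}$ that you invoke elsewhere in the proposal. And the step cannot be repaired by rerouting: for $R={=_X}$ and a nonprincipal $u$, the fibre is $\bigcap_{A\in u}\wt A=\{u\}$, which is closed but not open in $\scc X$, since nonprincipal ultrafilters are not isolated points. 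What your method actually delivers is that $R^*$ is left and right \emph{closed} on $\scc X$ (each fibre is the set of ultrafilters extending a fixed filter, an intersection of basic clopen sets) and, dually, that $-((-R)^*)$ is left and right \emph{open} on $\scc X$; the full ``clopen'' assertion of clauses 1 and 2 is not established by your argument, and your own example shows it cannot be established along these lines, so it should be flagged rather than derived from the false identification above.
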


\begin{proof}
Let us comment only that if $R$~is $=_X$ then $R^*$~is not open, 
while if $R$~is $(X\!\times X)\setminus\!=_X$ then $-((-R)^*)$ is not 
closed, and that $\wt<$ and~$(\wt>)^{-1}$ are neither left closed nor 
left open on~$\scc X$ (the examples in the proof of Theorem~\ref{2.2}).
\end{proof}


\subsection*{Characterizations of $\wt R$ and $R^*$ 
via continuous extensions of homomorphisms}

\begin{theorem}\label{?}
Let $X$~be a~discrete space and $R\subseteq X\times X$, and let $Y$~be 
a~compact Hausdorff space and $S\subseteq Y\times Y$ right closed in the 
product topology on $Y\times Y$. Then the continuous extension~$\wt h$ 
of any homomorphism~$h$ of $(X,R)$ into $(Y,S)$ is a~homomorphism of 
$(\scc X,\wt R)$ into $(Y,S)$. Moreover, $\wt R$~is a~unique extension 
of~$R$ with this property.
\end{theorem}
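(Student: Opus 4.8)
The plan is to split the statement into two assertions: first, that $\wt h$ is a homomorphism of $(\scc X,\wt R)$ into $(Y,S)$; second, that $\wt R$ is the unique extension of $R$ to a binary relation on $\scc X$ for which \emph{every} homomorphism into every such right-closed $(Y,S)$ lifts along $\wt{\;}$. For the first part I would argue directly from the left-and-right-closure characterization of $\wt R$ (Theorem~\ref{thm lcl rcl}), together with the fact that $\wt h\colon\scc X\to Y$ is the unique continuous map extending $h$ (the universal property of $\scc X$ as the Stone--\v{C}ech compactification of the discrete space $X$). Concretely, suppose $\wt R(u,v)$; by Theorem~\ref{thm lcl rcl}, $(u,v)\in\rcl(\lcl R)$, so there is $y_0\in\scc X$ with $(u,v)\in\cl\bigl((\lcl R)_{(y_0)}\bigr)$, and in turn $v$ is a limit of principal ultrafilters $y$ with $(u,y)\in\cl R^{(x)}$ for suitable $x$. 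Chasing this, one shows $u$ lies in the closure of $\{x:R(x,y)\}$ for a net of $y$'s converging to $v$; applying $\wt h$ (continuous) and using that $h$ is a homomorphism so $R(x,y)\Rightarrow S(h(x),h(y))$, one gets $(\wt h(u),\wt h(v))$ in the closure, in the relevant sections, of $S$ — and here the hypothesis that $S$ is right closed in $Y\times Y$ is exactly what lets us conclude $(\wt h(u),\wt h(v))\in S$ rather than merely in some closure of $S$. The asymmetry (right closed, not left closed) matches the asymmetric role of $\rcl$ being applied outermost in $\rcl(\lcl R)$.

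For the uniqueness part, suppose $R'\subseteq\scc X\times\scc X$ is another extension of $R$ such that every homomorphism $h$ of $(X,R)$ into every compact Hausdorff $(Y,S)$ with $S$ right closed lifts to a homomorphism of $(\scc X,R')$ into $(Y,S)$. I would apply this to a carefully chosen test space. The natural candidate is $(Y,S)=(\scc X,\wt R)$ itself with $h=\mathrm{id}_X$: by part~1, $\wt R$ \emph{is} right closed in $\scc X\times\scc X$? — one must check this, but in fact $\wt R=\rcl(\lcl R)$ need not be right closed as a subset of the product; what \emph{is} true (Proposition right after Corollary~\ref{coro lcl rcl}, clause~3) is that $\wt R$ is right clopen on $\scc X$, i.e.\ each right section $\wt R_{(v)}$ is clopen, which is the "right closed" hypothesis in the sense the theorem uses. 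So taking $Y=\scc X$, $S=\wt R$, $h=\mathrm{id}$, whose continuous extension is $\mathrm{id}_{\scc X}$, the hypothesis on $R'$ gives that $\mathrm{id}_{\scc X}$ is a homomorphism of $(\scc X,R')$ into $(\scc X,\wt R)$, i.e.\ $R'\subseteq\wt R$. For the reverse inclusion I would instead exploit that $R'\supseteq R$ is forced on principal pairs and then use a \emph{density} argument: by Theorem~\ref{thm lcl rcl} again, $\wt R=\rcl(\lcl R)$ is built from $R$ by two closure-like operations taken along sections, and any relation $R'$ extending $R$ which is the homomorphic image target for enough maps must absorb these section-closures; running the lifting hypothesis with suitable projection-type homomorphisms $X\times X\to X$ (or rather with $h$ the identity into the space whose relation is $\lcl R$ and then $\rcl R$) pins $R'$ from below to contain $\rcl(\lcl R)=\wt R$.

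I expect the main obstacle to be the uniqueness direction, specifically producing the "right" test homomorphisms that force $\wt R\subseteq R'$: one needs a compact Hausdorff $(Y,S)$ with $S$ right closed and a homomorphism $h\colon(X,R)\to(Y,S)$ such that $\wt h$ separates the pairs in $\wt R$ from any strictly smaller candidate $R'$. The honest way is probably to show that $(\scc X,\wt R)$ is \emph{itself} the free such object — i.e.\ it already has $S=\wt R$ right closed on $\scc X$ and $\mathrm{id}_X$ as the universal homomorphism — so that both inclusions come from applying the lifting property to this single canonical test case in the two directions (once to $R'\to\wt R$ via $\mathrm{id}$, once observing $R'$ must also satisfy the property, forcing $\wt R\subseteq R'$ because $\wt R$ is generated from $R$ by operations preserved under the lifting). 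The routine verifications — continuity of $\wt h$, that $\wt h$ extends $h$, that "$S$ right closed" transfers through $\wt h$ the way Theorem~\ref{thm lcl rcl}'s $\rcl\lcl$ description demands — I would relegate to short paragraphs, citing \cite{Saveliev} where the homomorphism-lifting for $\wt{\;}$ was established for arbitrary first-order models.
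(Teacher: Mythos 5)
The paper does not actually prove this theorem: its ``proof'' is the single line ``Theorem~4.2 in~\cite{Saveliev}'', so there is no internal argument to compare yours against, and reconstructing one is a legitimate exercise. Your reconstruction has the right overall shape (push $\wt R=\rcl(\lcl R)$ through the continuous $\wt h$ and let closedness of sections of $S$ absorb the closures; prove uniqueness via a canonical test object), but both halves have genuine gaps. In the first half, the step you wave through (``chasing this, one shows\dots'') is exactly where the hypothesis is consumed, and right-closedness of $S$ in the sense defined in Section~3 of this paper (all right sections $S_{(y)}$ closed) is \emph{not} enough. Unwinding $\wt R=\rcl(\lcl R)$, the proof of $S(\wt h(u),\wt h(v))$ needs two closure stages: the outer one ($u\in\cl\{x:\{y:R(x,y)\}\in v\}$) is absorbed by closedness of the right section of $S$ at $\wt h(v)$, but the inner one ($v\in\cl\{y:R(x,y)\}$ for each relevant $x$) needs the \emph{left} sections of $S$ at the points $h(x)$, $x\in X$, to be closed. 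Without that the statement is false under the literal reading of ``right closed'': take $Y=\scc\omega$, $S=\scc\omega\times\omega$ (every right section is closed), $R=\omega\times\omega$, $h$ the inclusion; then $\wt R=\scc\omega\times\scc\omega\not\subseteq S$, yet $\wt h=\mathrm{id}$. So either ``right closed'' must be read in the stronger, iterated sense of \cite{Saveliev} (right sections closed everywhere \emph{and} left sections closed at the points of the dense copy of $X$ --- the analogue of the topological-centre condition in the Hindman--Strauss homomorphism theorem for semigroups), or an explicit hypothesis on the left sections over $h[X]$ must be added. Your sketch does not notice that the inner stage needs anything at all; the remark that the asymmetry ``matches $\rcl$ being outermost'' is intuition, not a verification.

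In the uniqueness half, your test object $(\scc X,\wt R)$ with $h=\mathrm{id}_X$ correctly yields $R'\subseteq\wt R$ (and you rightly check that $\wt R$ is right clopen on $\scc X$ so that it qualifies as a target). But the reverse inclusion as you describe it (``projection-type homomorphisms'', ``$R'$ must absorb these section-closures'') does not work: the lifting property as literally stated only ever bounds $R'$ from above --- indeed $R'=R$ itself satisfies it vacuously --- so no choice of test homomorphism can force $\wt R\subseteq R'$. Uniqueness is only meaningful once the property is reformulated so that $(\scc X,R')$ is itself an admissible target (the ``free object'' reading you float in your last paragraph), after which both inclusions follow from the two identity liftings; that reformulation, together with the verification that $R'$ satisfies the closedness hypotheses needed to be a target, is the missing content. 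As written, neither half is complete, and the first half as stated would break on the counterexample above.
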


\begin{proof}
Theorem~4.2 in~\cite{Saveliev}.
\end{proof}

\begin{theorem}\label{??}
Let $X$~be a~discrete space and $R\subseteq X\times X$, and let 
$Y$~be a~compact Hausdorff space and $S\subseteq Y\times Y$ closed in 
the product topology on $Y\times Y$. Then the continuous extension~$\wt h$ 
of any homomorphism~$h$ of $(X,R)$ into $(Y,S)$ is a~homomorphism of 
$(\scc X,R^*)$ into $(Y,S)$. Moreover, $R^*$~is a~unique extension
of~$R$ with this property.
\end{theorem}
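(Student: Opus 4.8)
The plan is to follow the same two-part strategy used for Theorem~\ref{?} (the right-closed case), substituting the $^*$-extension for the $\wt{}\;$-extension at every step and exploiting the topological characterization $R^*=\cl R$ from Theorem~\ref{3.1}. So first I would prove that $\wt h$ is a homomorphism of $(\scc X,R^*)$ into $(Y,S)$, and then prove uniqueness.

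For the first part, suppose $R^*(u,v)$; by Theorem~\ref{3.1} this says $(u,v)\in\cl R$ in $\scc X\times\scc X$. The map $\wt h\times\wt h:\scc X\times\scc X\to Y\times Y$ is continuous (being a product of continuous maps, and $\wt h$ exists and is continuous because $Y$ is compact Hausdorff). Since $h$ is a homomorphism of $(X,R)$ into $(Y,S)$, we have $(\wt h\times\wt h)(R)\subseteq S$, i.e.\ $(\wt h\times\wt h)$ maps the set $R\subseteq X\times X$ into $S$. Now by continuity, $(\wt h\times\wt h)(\cl R)\subseteq\cl\bigl((\wt h\times\wt h)(R)\bigr)\subseteq\cl S=S$, where the last equality is exactly the hypothesis that $S$ is closed in $Y\times Y$. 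Applying this to the point $(u,v)\in\cl R$ gives $(\wt h(u),\wt h(v))\in S$, which is precisely $S(\wt h(u),\wt h(v))$. This is the homomorphism property, and it is remarkably clean: it is just the statement that a continuous map carries closures into closures, combined with $R^*=\cl R$ and $S=\cl S$. I expect no real obstacle here.

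The second part, uniqueness, is the place where I expect the work to lie, and I would model it on the corresponding argument in~\cite{Saveliev} for Theorem~\ref{?}. Suppose $R'$ is some extension of $R$ to $\scc X\times\scc X$ such that for every compact Hausdorff $Y$, every closed $S\subseteq Y\times Y$, and every homomorphism $h$ of $(X,R)$ into $(Y,S)$, the continuous extension $\wt h$ is a homomorphism of $(\scc X,R')$ into $(Y,S)$. I want to show $R'=R^*$. To show $R'\subseteq R^*$, I would take the universal test object: let $Y=\scc X$ (compact Hausdorff), let $S=R^*=\cl R$ (closed by Theorem~\ref{3.1}), and let $h:X\to\scc X$ be the canonical embedding $x\mapsto$ the principal ultrafilter at $x$, which is a homomorphism of $(X,R)$ into $(\scc X,R^*)$ since $R\subseteq R^*$. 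Then $\wt h$ is the identity on $\scc X$, so the hypothesis that $\wt h$ is a homomorphism of $(\scc X,R')$ into $(\scc X,R^*)$ says exactly $R'\subseteq R^*$. For the reverse inclusion $R^*\subseteq R'$, the first part of the theorem is not directly available since we only know $R'$ satisfies the homomorphism-preservation property, not that $R'=R^*$; instead I would argue that $R'$, being closed under the same continuous-extension constraint, must contain $\cl R=R^*$: one shows $R\subseteq R'$ (take $h$ the identity on $X=Y$ with $S=R$, noting $X$ discrete need not be compact — so instead one should take $Y=\scc X$, $S=R^*$ again and run the density argument from the proof of Theorem~\ref{3.1}). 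Concretely, since $R\subseteq R'$ and the homomorphism property forces $R'$ to behave like $\cl R$ under the identity extension, and since $R$ is dense in $R^*$ (shown in the proof of Theorem~\ref{3.1}), any $R'\supseteq R$ that is a legitimate target for all such $\wt h$ must contain all of $R^*$; the cleanest route is: apply the already-proven first part with $R'$ in place of $R^*$ is not legal, so instead apply the first part of \emph{this} theorem to deduce that for the specific choice above $\wt h=\mathrm{id}$ is a homomorphism $(\scc X,R^*)\to(\scc X,R')$ — wait, that requires $R'$ closed — so the honest argument is that uniqueness follows because $R^*$ both satisfies the property (part one) and is minimal among relations extending $R$ that satisfy it (via the identity test giving $R'\subseteq R^*$) and maximal (via $R\subseteq R'$ together with $R'$ being forced closed by the test with $S$ ranging over closed sets, in particular $S=\cl R'$). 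The subtle point, and the main obstacle, is pinning down exactly which universal test objects force $R^*\subseteq R'$; I would resolve it by citing the structure of the proof of the analogous Theorem~4.2 in~\cite{Saveliev}, which handles precisely this kind of uniqueness-of-canonical-extension argument, here simplified because the relevant class of target relations is the class of \emph{closed} relations rather than right-closed ones.

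In summary: part one is a three-line consequence of ``continuous images of closures lie in closures'' together with $R^*=\cl R$ (Theorem~\ref{3.1}) and $S=\cl S$; part two is the standard uniqueness argument for a canonical extension, using the canonical embedding $X\hookrightarrow\scc X$ and the target $(\scc X,R^*)$ as the universal test object, with the only delicate bookkeeping being the choice of test spaces that force both inclusions $R'\subseteq R^*$ and $R^*\subseteq R'$.
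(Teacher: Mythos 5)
Your first part is correct but takes a genuinely different route from the paper. The paper argues directly from the definition of $R^*$: given $R^*(u,v)$ and neighborhoods $U\ni\wt h(u)$, $V\ni\wt h(v)$, it picks $A\in u$ with $\cl_{\scc X}A\subseteq\wt h^{-1}U$, uses $RA\in v$ to produce $x'\in A$ and $x''\in h^{-1}V$ with $R(x',x'')$, and then closedness of $S$ yields $S(\wt h(u),\wt h(v))$. You instead invoke Theorem~\ref{3.1} ($R^*=\cl R$) together with the general fact that a continuous map carries the closure of a set into the closure of its image, applied to $\wt h\times\wt h$. Your version is shorter and conceptually cleaner at the price of leaning on Theorem~\ref{3.1}; the paper's version is self-contained and is essentially an unwinding of the same continuity fact. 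Either is acceptable for this half.

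The uniqueness half is where your proposal does not close, and the obstacle you stumble on is real. Your minimality test is exactly right: taking $Y=\scc X$, $S=R^*$ (closed by Theorem~\ref{3.1}) and $h$ the canonical embedding, $\wt h$ is the identity and the hypothesis on $R'$ gives $R'\subseteq R^*$. But for $R^*\subseteq R'$ you oscillate without settling the matter: as you yourself note, applying the first part to the target $(\scc X,R')$ requires $R'$ to be closed, and your attempted repair with $S=\cl R'$ only yields the vacuous $R'\subseteq\cl R'$. In fact, read literally the uniqueness claim needs a closedness hypothesis on the competitor: any $R'$ with $R\subseteq R'\subseteq R^*$ (for instance $R$ itself, viewed inside $\scc X\times\scc X$) satisfies the stated extension property, since the first part already gives $S(\wt h(u),\wt h(v))$ for all $(u,v)\in R^*$. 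The intended reading -- which is what the paper's one-line appeal to ``the uniqueness of the Stone--\v{C}ech compactification and the fact that $R^*$ is closed'' encodes -- is uniqueness among \emph{closed} extensions, i.e.\ uniqueness of the universal object in the category of compact Hausdorff spaces equipped with closed binary relations. Once $R'$ is assumed closed, $(\scc X,R')$ is itself a legitimate target, the first part applied to $\mathrm{id}_X:(X,R)\to(\scc X,R')$ gives $R^*\subseteq R'$, and combined with your minimality test this forces $R'=R^*$. So your proposal has all the ingredients but leaves the maximality direction unresolved; make the closedness of $R'$ explicit and the argument finishes in two lines.
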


\begin{proof}
Fix any $u,v\in\scc X$ and show that $R^*(u,v)$ implies
$S(\wt h(u),\wt h(v))$. Since $S$~is closed in the product topology 
on $Y\times Y$, it suffices to verify that for any neighborhood~$U$ 
of $\wt h(u)$ and any neighborhood~$V$ of $\wt h(v)$, there exist
$y'\in U$ and $y''\in V$ with $S(y',y'')$.

By continuity of $\wt h:\scc X\to Y$, $\wt h^{-1}U$ is a~neighborhood 
of~$u$ and $\wt h^{-1}V$ is a~neighborhood of~$v$. Fix any $A\in u$ 
such that $\wt A\subseteq\wt h^{-1}U$ (where $\wt A=\cl_{\scc X}A$). 
By $R^*(u,v)$, we get $RA\in v$, thus $\wt{RA}$ is a~neighborhood of~$v$, 
and hence, so is 
$$
\wt{RA}\cap\wt h^{-1}V.
$$ 
Therefore, there are $x'\in A$ and $x''\in h^{-1}V$ such that $R(x',x'')$.
So we have $h(x')\in hA\subseteq U$ and $h(x'')\in V$, and as $h$~is
a~homomorphism, $S(h(x'),h(x''))$. Thus letting $y'=h(x')$ and $y''=h(x'')$, 
we get $y'\in U$, $y''\in V$, and $S(y',y'')$, as required.

That $R^*$~is a~unique extension of~$R$ having this property easily follows 
from the uniqueness of the Stone--\v{C}ech compactification of~$X$ and the 
fact that $R^*$~is closed in the product topology on $\scc X\times\scc X$ 
(Theorem~\ref{3.1}).
\end{proof}


\subsection*{Generalizations to topological spaces}

The topological characterizations of $R^*$ and $\wt R$ allow 
to expand the ${}^*$ and $\,\wt{\;}\,$~extensions from relations~$R$ 
on~$X$ to relations~$R$ on~$\scc X$, and further, on any topological 
spaces. Thus instead of the ${}^*$ and $\,\wt{\;}\,$ operators 
we can consider the $\cl$ and $\rcl\circ\lcl$ operators
in the product topology on the square of the given space.

\begin{theorem}\label{2.3}
Let\, $X$~be a~topological space and $R,S\subseteq X\!\times X$.
The following equalities hold:
\begin{align*}
\cl(R\cup S)&=\cl R\cup\cl S,
&
\rcl(\lcl(R\cup S))&=\rcl(\lcl R)\cup\rcl(\lcl S),
\\
\cl(R^{-1})&=(\cl R)^{-1},
&
\cl(R\circ S)&=\cl R\circ\cl S\text{ if $X$~is compact }T_2.
\end{align*}
Moreover, the following inclusions hold:
\begin{align*}
-(\cl R)&\subseteq\cl(-R),
&
-(\rcl(\lcl R))&\subseteq\rcl(\lcl(-R)),
\\
\cl(R\cap S)&\subseteq\cl R\cap\cl S,
&
\rcl(\lcl(R\cap S))&\subseteq\rcl(\lcl R)\cap\rcl(\lcl S),
\\
&&\cl(R\circ S)\subseteq\cl R\circ\cl S,&
\end{align*}
neither of them, in general, cannot be replaced by an equality. 
Finally, $\rcl(\lcl(R^{-1})$ and $(\rcl(\lcl R))^{-1}$, in general, 
are $\subseteq$-incomparable.
\end{theorem}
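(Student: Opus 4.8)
The plan is to split the statement into three blocks: the ``Boolean'' identities and inequalities together with the behaviour under ${}^{-1}$ (all soft), the behaviour under composition (where genuine topology, and compactness, enters), and the list of examples witnessing strictness and incomparability.

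\textbf{Soft part.} On any space $\cl$ is monotone, inflationary ($R\subseteq\cl R$) and additive ($\cl(R\cup S)=\cl R\cup\cl S$); by Lemma~\ref{3.2} the operators $\lcl$ and $\rcl$ share all three properties, hence so does the composite $\rcl\circ\lcl$ (monotonicity and inflationarity pass to composites, and $\rcl(\lcl(R\cup S))=\rcl(\lcl R\cup\lcl S)=\rcl(\lcl R)\cup\rcl(\lcl S)$ by additivity of each factor). Monotonicity yields $\cl(R\cap S)\subseteq\cl R\cap\cl S$ and its $\rcl\circ\lcl$-analogue; inflationarity yields $-(\cl R)\subseteq-R\subseteq\cl(-R)$ and likewise $-(\rcl(\lcl R))\subseteq-R\subseteq\rcl(\lcl(-R))$; additivity yields the two $\cup$-equalities. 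For inversion, the transposition $(x,y)\mapsto(y,x)$ is a self-homeomorphism of $X\!\times X$, so it commutes with $\cl$, which is exactly $\cl(R^{-1})=(\cl R)^{-1}$; the same commutation together with clause~3 of Lemma~\ref{3.2} also explains why $\rcl(\lcl(R^{-1}))$ and $(\rcl(\lcl R))^{-1}$ need not coincide (a transposition swaps the ``left'' and ``right'' closure sides), paralleling the incomparability of $\wt{R^{-1}}$ and $\wt R^{-1}$.

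\textbf{Composition.} The engine, replacing the ultrafilter construction of the distributivity theorem of Section~2, is the classical fact that \emph{on a compact Hausdorff space the relational composition of two closed relations is closed}: if $(x,z)\notin\cl R\circ\cl S$ then for each $y\in X$ either $(x,y)\notin\cl R$ or $(y,z)\notin\cl S$, so pick a basic box $U_y\!\times V_y$ around $(x,y)$ missing $\cl R$ in the first case or $V_y\!\times W_y$ around $(y,z)$ missing $\cl S$ in the second; the middle opens $V_y$ cover the compact factor, finitely many $V_{y_1},\dots,V_{y_k}$ suffice, and $\bigl(\bigcap_iU_{y_i}\bigr)\times\bigl(\bigcap_iW_{y_i}\bigr)$ is a neighbourhood of $(x,z)$ disjoint from $\cl R\circ\cl S$. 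Since $R\circ S\subseteq\cl R\circ\cl S$, closedness of the right-hand side gives $\cl(R\circ S)\subseteq\cl R\circ\cl S$. For the reverse inclusion (hence equality in the compact Hausdorff case) one argues by density: given $(x,z)\in\cl R\circ\cl S$ with a witness $y$ and a basic box $U\!\times W$ around $(x,z)$, one produces a point of $R\circ S$ inside $U\!\times W$ by choosing the middle coordinate coherently near $y$, imitating the finite-intersection-property argument of the earlier theorem. \emph{This reverse inclusion is the main obstacle}: it is genuinely non-topological and must use extra structure of the space, and compactness is already needed for the bare inclusion — for $X=\mathbb R$, $R=\{(1/n,n):n\ge 1\}$, $S=\{(n,1/n):n\ge 1\}$ one has $\cl R=R$, $\cl S=S$, $R\circ S=\{(1/n,1/n):n\ge 1\}$, whence $(0,0)\in\cl(R\circ S)\setminus(\cl R\circ\cl S)$.

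\textbf{Examples.} The strictness witnesses descend from the $\scc X$-picture: for $X$ infinite discrete, $\scc X$ is compact Hausdorff and, by Theorems~\ref{3.1} and~\ref{thm lcl rcl}, $\cl$ and $\rcl\circ\lcl$ on $\scc X\!\times\scc X$ are just ${}^*$ and $\,\wt{}\,$. Thus the pairs and orders already used in the proofs of Theorem~\ref{2.2}, of the distributivity theorem of Section~2, and of Proposition~\ref{prop lcl rcl} furnish $-(\cl R)\ne\cl(-R)$ (take $R={=_X}$), $\cl(R\cap S)\ne\cl R\cap\cl S$ and $\rcl(\lcl(R\cap S))\ne\rcl(\lcl R)\cap\rcl(\lcl S)$ (take $R={=_X}$, $S=(X\!\times X)\setminus{=_X}$), and the $\subseteq$-incomparability of $\rcl(\lcl(R^{-1}))$ with $(\rcl(\lcl R))^{-1}$ (take $R$ a linear order that is not a well-order); the composition inclusion is shown strict by the $\mathbb R$-example above, which simultaneously exhibits the necessity of compactness. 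Observing that these equalities and inequalities are preserved in the respective squares completes the proof.
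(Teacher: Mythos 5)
Your ``soft'' block is correct (and deriving $\cl(R^{-1})=(\cl R)^{-1}$ from the fact that transposition is a self-homeomorphism of $X\times X$ is cleaner than the paper's pointwise verification), and your tube-lemma proof that closed relations on a compact Hausdorff space compose to a closed relation is a valid alternative to the paper's argument, which instead intersects the centered family of closed sets $C_{U,V}=A_U\cap B_V$; either way one gets $\cl(R\circ S)\subseteq\cl R\circ\cl S$ under compactness. The genuine gap is exactly the step you defer to ``density'': the converse inclusion $\cl R\circ\cl S\subseteq\cl(R\circ S)$ cannot be obtained by ``choosing the middle coordinate coherently near $y$'', because it is \emph{false} even for compact Hausdorff $X$. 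Take $X=[-1,1]$, $S=\{(0,-1/n):n\ge 1\}$ and $R=\{(1/n,0):n\ge 1\}$ (with the paper's convention $(x,z)\in R\circ S\lra(\exists y)(S(x,y)\wedge R(y,z))$): both relations are closed, and $R\circ S=\emptyset$ since $\{-1/n\}$ and $\{1/n\}$ are disjoint, yet $(0,0)\in\cl S$ and $(0,0)\in\cl R$, so $(0,0)\in\cl R\circ\cl S\setminus\cl(R\circ S)$. The $\scc X$-analogue $R^*\circ S^*\subseteq(R\circ S)^*$ survives only because there $R,S$ live on the dense set $X$ of isolated points and $R^*(u,v)$ is computed by $(\forall A\in u)\,RA\in v$; no such mechanism exists for arbitrary closed relations. (For what it is worth, the paper's own treatment of this direction is also defective: its argument for the ``converse inclusion'' in fact only proves that $\cl R\circ\cl S$ is closed, which yields the inclusion you already had, not its converse.)

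Two further points. First, your $\mathbb R$-example (like the paper's hyperbola with $R\circ R$) shows that the \emph{unconditional} inclusion $\cl(R\circ S)\subseteq\cl R\circ\cl S$, which the theorem lists among the inclusions valid in every space, already fails without compactness; the passage from ``for each $U,V$ there is a middle point $c$'' to ``one $c$ works for all $U,V$'' is precisely where compactness is needed, and the paper's ``clearly weaker'' step makes this quantifier exchange illegitimately. You should state explicitly that you prove this inclusion only for compact Hausdorff $X$ and that your example refutes the general claim, rather than presenting it merely as a strictness witness. Second, your witness for strictness of $\rcl(\lcl(R\cap S))\subseteq\rcl(\lcl R)\cap\rcl(\lcl S)$ does not work: for relations $T\subseteq X\times X$ one has $\rcl(\lcl T)=\wt T$ by Theorem~\ref{thm lcl rcl}, and $\wt{\;}$ distributes over $\cap$ (Section~2), so $R={=_X}$ and $S=(X\times X)\setminus{=_X}$ give $\emptyset=\emptyset$. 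Since $\wt{\;}$ distributes over all Boolean operations on relations induced from $X\times X$, the Boolean strictness witnesses for $\rcl\circ\lcl$ must be sought among relations not of that form (or in a different space); this part of your example list needs to be redone.
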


\begin{proof}
The claims about interplay with Boolean operations are trivial.

Show that $\cl(R^{-1})=(\cl R)^{-1}$ holds for any topological 
space~$X$. Indeed, $(a,b)\in\cl(R^{-1})$ means that for all 
neighborhoods~$O$ of $(a,b)$ there is $(a',b')\in R^{-1}\cap O$. 
Since any neighborhood in the product topology includes a~basic 
neighborhood of form $U\!\times V$ for some $U$ and~$V$ open 
in~$X$, the latter means that for all neighborhoods $U$ of~$a$ 
and $V$ of~$b$ there are $a'$ and~$b'$ such that $a'\in U$, 
$b'\in V$, and $(a',b')\in R^{-1}$, i.e. $(b',a')\in R$. 
But this is clearly equivalent to $(b,a)\in\cl R$, 
and thus to $(a,b)\in(\cl R)^{-1}$, as required.

Show that the inclusion $\cl(R\circ S)\subseteq\cl R\circ\cl S$ 
holds in any topological space. Indeed, on the one hand, the formula
$(a,b)\in\cl(R\circ S)$ is clearly equivalent to the assertion that 
for all neighborhoods $U$ of~$a$ and $V$ of~$b$ there are $a',b',c$ 
such that $a'\in U$, $b'\in V$, $S(a',c)$, and $(c,b')\in R$.

On the other, the formula $(a,b)\in\cl R\circ\cl S$ means that 
there is~$c$ with $(a,c)\in\cl S\wedge(c,b)\in\cl R$. The first 
conjunct is equivalent to the assertion that for all neighborhoods 
$U$ of~$a$ and $W'$ of~$c$ there are $a',c'$ such that $a'\in U$, 
$c'\in W_1$, and $S(a',c')$; and the second, that for all 
neighborhoods $W''$ of~$c$ and $V$ of~$b$ there are $c'',b'$ 
such that $c''\in W''$, $b'\in V$, and $R(c'',b')$. Letting 
$W=W'\cap W''$, we see that the formula is equivalent to the 
existence of~$c$ such that for all neighborhoods $U$ of~$a$, 
$W$ of~$c$, $V$ of~$b$ there are $a',c',c'',b'$ satisfying
$a'\in U$, $\{c',c''\}\subseteq W$, $b'\in V$, and 
$S(a',c')\wedge R(c'',b')$. This is clearly weaker that 
the first formula, which shows the required inclusion.


Show now that if $X$~is compact Hausdorff then the converse inclusion 
$\cl R\circ\cl S\subseteq\cl(R\circ S)$ holds as well, thus getting 
the equality $\cl(R\circ S)=\cl R\circ\cl S$. 
As easy to see, this is equivalent to the following assertion:
\begin{quote}
If $X$~is compact Hausdorff and both $R$ and $S$ are closed in the space 
$X\times X$ with the standard product topology, then so is $R\circ S$.
\end{quote}
So let us verify that, for any closed $R$ and~$S$, we have $(a,b)\in R\circ S$ 
whenever $(a,b)\in\cl(R\circ S)$, i.e.~whenever for all neighborhoods $U$ of~$a$ 
and $V$ of~$b$ there are $a',b',c$ such that $a'\in U$, $b'\in V$, $(a',c)\in S$ 
and $(c,b')\in R$.

For any open $U\ni a$ let 
$$
A_U=
\{x\in X:(\exists a'\in\cl U)\,(a',x)\in S\}.
$$
Thus $A_U=\pr_1((\cl U\times X)\cap S)$, 
and so $A_U$~is a~non-empty closed set.
(It is closed as a~continuous image of 
the compact set $(\cl U\times X)\cap S$ 
into the Hausdorff space~$X$.)
Similarly, for any open $V\ni b$ let 
$$
B_V=
\{x\in X:(\exists b'\in\cl V)\,(x,b')\in R\}.
$$
Thus $B_V=\pr_2((X\times \cl V)\cap R)$, 
and so $B_V$~is a~non-empty closed set.
Now let $C_{U,V}=A_U\cap B_V$. Thus 
\begin{align*}
C_{U,V}
&=
\{x\in X:
(\exists a'\in\cl U)\,(a',x)\in S
\,\wedge\, 
(\exists b'\in\cl V)\,(x,b')\in R
\}
\\
&=
\{x\in X:
(\exists a'\in\cl U)\,
(\exists b'\in\cl V)\,
(a',x)\in S\,\wedge\,(x,b')\in R
\},
\end{align*}
and so $C_{U,V}$ is a~non-empty closed set, too.

Let $\mathcal C=\{C_{U,V}:a\in U,b\in V,\text{ and $U,V$ are open}\}.$
The family~$\mathcal C$ is centered (i.e.~has the finite intersection 
property) since, for any $n<\omega$,
$$
\bigcap_{i<n}C_{U_i,V_i}=C_{U,V}\ne\emptyset
\quad\text{ where }\;
U=\bigcap_{i<n}U_i,\;V=\bigcap_{i<n}V_i.
$$
Therefore, since $X\times X$ is compact, 
$\bigcap\mathcal C\ne\emptyset.$
Pick any $c\in\bigcap\mathcal C$. We have got: 
for all neighborhoods $U$ of~$a$ 
and $V$ of~$b$ there are $a'\in\cl U$ and $b'\in\cl V$ such that 
$(a',c)\in S$ and $(c,b')\in R$. Moreover, a~stronger conclusion 
is true: there are $a'\in U$ and $b'\in V$ such that $(a',c)\in S$ 
and $(c,b')\in R$. Indeed, since $X$~is compact Hausdorff, it is 
regular, and so for any neighborhood~$O$ of any $x\in X$ 
there is a~neighborhood~$O'$ of~$x$ with $\cl O'\subseteq O$; 
hence we can replace $\cl U$ and $\cl V$ with $U$ and~$V$ resp.

Now note that, since $S$~is closed and 
for all neighborhoods $U$ of~$a$ there is 
$a'\in U$ with $(a',c)\in S$, we have $(a,c)\in S$. 
Similarly using that $R$~is closed, we get $(c,b)\in R$. 
Together we obtain $(a,c)\in S\wedge(c,b)\in R$, 
and thus $(a,b)\in R\circ S$, as required.

Finally, let us show that we can weaken neither compactness 
to local compactness, nor $T_2$- to $T_1$-axiom, even for $R=S$. 
Indeed, if $R$~is the hyperbole 
$\{(x,x^{-1}):x\in\mathbb R\setminus\{0\}\}$, then $R$~is closed 
in the usual (locally compact Hausdorff) topology on the real plane, 
however, $R\circ R$ is ${=_\mathbb R}\setminus\{(0,0)\}$ and so 
is not closed. Also, if we consider the Zariski topology on the 
real plane (whose closed sets are generated by real polynomial curves), 
which is a~compact $T_1$-topology, the same~$R$ provides 
a~second required counter-example.
\end{proof}


\section{Extending relations as multi-valued maps\/}

In this section, we show that ${}^*$-extensions of relations
are in fact continuous extensions of appropriate maps.

\subsection*{Continuous extensions of maps\/}

Recall that the ultrafilter extensions of functions, as defined 
in~\cite{Saveliev}, in the case of a~unary function $F:X\to X$ gives 
the standard continuous extension $\wt F:\scc X\to\scc X$ defined by 
letting for all $A\subseteq X$ and $u\in\scc X$,
$$
A\in\wt F(u)
\;\;\lra\;\; 
F^{-1}A\in u.
$$

An easy observation is that, if a~relation~$R$ on~$X$ is functional,
then $R^*$~coincides with its ultrafilter extension as a~map of $X$
into itself:

\begin{lemma}\label{ext map} 
For any function $F:X\to X$, 
$$\wt F=F^*.
$$
\end{lemma}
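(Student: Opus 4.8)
The plan is to unwind both definitions and check that the membership conditions for $\wt F(u)$ and for the relation determined by $F^*$ coincide for every $u\in\scc X$. Here $F$ is being viewed as the relation $R=\{(x,y):y=F(x)\}\subseteq X\times X$, so that $R^*$ is the relation on $\scc X$ given by clause~(i) of Lemma~\ref{2.1}. We want to show that, for fixed $u$, the set of $v$ with $R^*(u,v)$ is exactly the singleton $\{\wt F(u)\}$; this will give $R^*=\wt F$ as relations (the latter being identified with its graph).

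First I would compute what $R^*(u,v)$ says. By definition $R^*(u,v)$ holds iff for every $S\in v$ the set $\{x:(\exists y\in S)\,R(x,y)\}=\{x:F(x)\in S\}=F^{-1}S$ lies in $u$. So $R^*(u,v)$ is equivalent to $(\forall S\in v)\,F^{-1}S\in u$, i.e.\ to $v\subseteq\{S:F^{-1}S\in u\}=\wt F(u)$. Since both $v$ and $\wt F(u)$ are ultrafilters, the inclusion $v\subseteq\wt F(u)$ forces $v=\wt F(u)$. Conversely, taking $v=\wt F(u)$, the condition $(\forall S\in v)\,F^{-1}S\in u$ holds by the very definition of $\wt F(u)$, so $R^*(u,\wt F(u))$ holds. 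Hence for each $u$ the unique $v$ with $R^*(u,v)$ is $\wt F(u)$, which is precisely the assertion $\wt F=F^*$.

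There is essentially no obstacle here; the only point worth stating carefully is the passage from $v\subseteq\wt F(u)$ to $v=\wt F(u)$, which uses maximality of ultrafilters (one needs $\wt F(u)$ to actually be an ultrafilter, which is the standard fact underlying the continuous extension $\wt F:\scc X\to\scc X$, and is recalled just before the lemma). One may also note as a sanity check that on principal ultrafilters this recovers $F$ itself: if $u$ is the principal ultrafilter at $x$, then $S\in\wt F(u)$ iff $F^{-1}S\ni x$ iff $F(x)\in S$, so $\wt F(u)$ is the principal ultrafilter at $F(x)$, matching $R^*$ restricted to $X$. So the proof is just the two short computations above.
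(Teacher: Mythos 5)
Your proof is correct and is exactly the definition-unwinding the paper has in mind (its own proof is just ``Clear from the definitions''): $R^*(u,v)$ reduces to $(\forall S\in v)\,F^{-1}S\in u$, i.e.\ $v\subseteq\wt F(u)$, and maximality of ultrafilters gives $v=\wt F(u)$. Nothing to add.
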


\begin{proof} 
Clear from the definitions.
\end{proof}

This observation leads to the query whether a~similar fact holds for 
arbitrary (not neccessarily functional) relations. A~natural step in 
achieving this guess is to consider relations $R\subseteq X\times Y$ 
as multi-valued maps of $X$ into~$Y$, which in turn can be understood 
as (one-valued) maps of $X$ into the powerset of~$Y$. Again avoiding 
any new notation, we continue to denote them by the same symbol~$R$;
thus we regard $R$ as the map
$$
R:X\to P(Y)
$$
defined by letting $R(x)=\{y:R(x,y)\}$, for all $x\in X$.


In general, given a~map~$F$ of a~discrete set~$X$ to a~compact 
Hausdorff space~$Y$, its (unique) continuous extension 
$\wt F:\scc X\to Y$ is defined by letting for all $u\in\scc X$,
\begin{align*}
\wt F(u)=y
\text{ such that }
\{y\}=
\bigcap_{A\in u}\cl_{\scc X}FA.
\end{align*}
We point out that the map~$\wt F$ is also closed, i.e.~takes closed sets 
to closed sets (since continuous maps from compact spaces into Hausdorff
spaces always are closed).
If $Y=\scc X$, this coincides with the extension defined above
as it should do (see Lemma~3.2 in~\cite{Saveliev}).

The following easy observation shows that $R^*$ regarded as the map
$$
R^*:\scc X\to P(\scc X)
$$ 
can be expressed by a~similar formula.

\begin{lemma}\label{R*u} 
For any $R\subseteq X\times X$,
\begin{align*}
R^*(u)=\bigcap_{A\in u}\cl_{\scc X}RA.
\end{align*}
\end{lemma}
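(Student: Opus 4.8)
The plan is to unwind both sides of the claimed identity $R^*(u)=\bigcap_{A\in u}\cl_{\scc X}RA$ and show they describe the same set of ultrafilters $v\in\scc X$. First I would recall the relevant characterization already proved: by Lemma~\ref{2.4}, $R^*(u,v)$ holds iff for all $A\in u$ and all $B\in v$ we have $R\cap(A\times B)\ne\emptyset$, equivalently (by Lemma~\ref{2.1}, i.e.~clause~(i)) iff $RA\in v$ for every $A\in u$, where $RA=\{y:(\exists x\in A)\,R(x,y)\}$ is precisely the image set $R^*(u)$ is being compared against. So the left side, viewed as a subset of $\scc X$, is $\{v\in\scc X:(\forall A\in u)\,RA\in v\}$.

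Next I would identify the right side. For a fixed $A\subseteq X$, the basic clopen set $\cl_{\scc X}RA=\{v\in\scc X:RA\in v\}$ (this is the standard fact that the closure in $\scc X$ of a subset $S\subseteq X$ is exactly $\{v:S\in v\}$, under the identification of $X$ with principal ultrafilters). Hence $\bigcap_{A\in u}\cl_{\scc X}RA=\bigcap_{A\in u}\{v\in\scc X:RA\in v\}=\{v\in\scc X:(\forall A\in u)\,RA\in v\}$, which is visibly the same set as on the left. That settles the equality of the two sets of ultrafilters, i.e.~the Lemma.

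The only point requiring a word of care — and the mild obstacle — is the bookkeeping of two dualities running in parallel: the expression $R^*$ in clause~(i) of Lemma~\ref{2.1} has the existential quantifier over $y$ bundled into the image $RA$, whereas the analogue $\wt F(u)=\bigcap_{A\in u}\cl_{\scc X}FA$ for genuine functions uses the literal image $FA$; so I want to make explicit that for a relation regarded as a multi-valued map $R:X\to P(Y)$, the ``image'' $R\image A$ in the sense of multi-valued maps, the set $RA$ in the notation of the proof of Theorem~\ref{2.3}, and the witness set $\{y:(\exists x\in A)\,R(x,y)\}$ in clause~(i) all coincide. Once that identification is flagged, the proof is the one-line chain
\begin{align*}
v\in R^*(u)
\;&\lra\;R^*(u,v)
\;\lra\;(\forall A\in u)\;RA\in v
\;\lra\;(\forall A\in u)\;v\in\cl_{\scc X}RA
\;\lra\;v\in\bigcap_{A\in u}\cl_{\scc X}RA,
\end{align*}
where the first equivalence is the definition of $R^*$ as a multi-valued map, the second is clause~(i) of Lemma~\ref{2.1} (equivalently Lemma~\ref{2.4}), the third is the description of basic clopen sets in $\scc X$, and the fourth is the definition of intersection.
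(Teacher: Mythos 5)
Your proof is correct and is essentially the paper's own argument: the paper likewise unwinds $R^*(u,v)$ into $(\forall A\in u)\,RA\in v$ and identifies $\cl_{\scc X}RA$ with the basic clopen set $\{v\in\scc X:RA\in v\}$, yielding the same chain of equalities. One tiny labelling slip: the form $(\forall A\in u)\,RA\in v$ is clause (ii) of Lemma~\ref{2.1} rather than clause (i); the two are equivalent by that lemma, so nothing is lost.
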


\begin{proof} 
We have
\begin{align*}
R^*(u)
&=
\{v\in\scc X:R^*(u,v)\}
\\&=
\bigl\{v\in\scc X:(\forall A\in u)\,
\{y:(\exists x\in A)\,R(x,y)\}\in v\bigr\}
\\&=
\bigl\{v\in\scc X:(\forall A\in u)\,
RA\in v\bigr\}
\\&=
\bigcap_{A\in u}\{v\in\scc X:RA\in v\}
=
\bigcap_{A\in u}\wt{RA}
=
\bigcap_{A\in u}\cl_{\scc X}RA.
\end{align*}
Here $\wt{RA}$ is the basic open set of~$\scc X$ given by $RA\subseteq X$.
\end{proof}

This tells in favour of our guess that $R^*$~is a~continuous extension 
of an appropriate map generated by~$R$ though does not tell yet 
what such a~map should be. One can try to start from $R$~itself, 
regarded as the map between the discrete spaces $X$ and~$P(X)$. 
However its continuous extension 
$$
\wt R:\scc X\to\scc P(X)
$$ 
has the range~$\scc P(X)$, while $R^*$ regarded as the map has 
the range~$P(\scc X)$, or rather, its subset consisting of 
closed sets of~$\scc X$. As well-known, the latter set, with any 
reasonable topology on it, is not homeomorphic to $\scc P(X)$.

Before we shall be able to mark the right way, we should make 
some preparations concerning reasonable topologies on the set 
of closed sets of~$\scc X$.


\subsection*{Vietoris topologies 
on the set of non-empty closed sets}

If $X$~is a~topological space, let $P_\cl(X)$~denote
the closed topology minus the empty set:
$$
P_\cl(X)=
\{C\subseteq X:C\text{ is non-empty and closed in }X\}.
$$

For any $A\subseteq X$ let
\begin{align*}
A^-&=
\{B\in P_\cl(X):B\subseteq A\},
\\
A^+&=
\{B\in P_\cl(X):B\cap A\ne\emptyset\}.
\end{align*}
Note that $A^-=P_\cl A$ and that the operations ${}^-$ and~${}^+$ 
are dual in the following: for any $A\subseteq X$, we have
$A^-=P_\cl(X)\setminus(X\setminus A)^+$ and
$A^+=P_\cl(X)\setminus(X\setminus A)^-$.

The family $\{C^{-}:C\in P_\cl(X)\}$ is a~closed subbase of 
the {\it lower Vietoris\/} topology on~$P_\cl(X)$, and 
the family $\{C^{+}:C\in P_\cl(X)\}$, of 
the {\it upper Vietoris\/} topology on the same set.
In fact, the latter family is a~closed base since it is closed 
under finite unions. Indeed, for any family $\{C_i:i\in I\}$ 
of subsets of~$X$ we clearly have 
$
\bigcup_iC_{i}^+=
\bigl(\bigcup_iC_i\bigr){}^+,
$
and if the~$C_i$ are closed and the family is finite, 
then the latter set is of form~$C^+$ for a~closed set~$C$.
For $\tau$~an (open) topology of~$X$, we shall denote 
the lower and upper Vietoris topologies on~$P_\cl(X)$ 
by $\tau^{-}$ and~$\tau^{+}$ resp.

Observe that $\{O^+:O\in\tau\}$ is an open base of~$\tau^-$,
and $\{O^-:O\in\tau\}$ is an open subbase of~$\tau^+$, and that
for all $S\subseteq P_\cl(X)$, we have
\begin{align*}
\cl_{\tau^-}S
&=
\bigcap\Bigl\{
\bigcup_{C\in F}C^-:
F\in P_\omega(P_\cl(X))
\wedge 
S\subseteq\bigcup_{C\in F}C^-
\Bigr\},
\\
\cl_{\tau^+}S
&=
\bigcap\bigl\{
C^+:C\in P_\cl(X)\wedge S\subseteq C^+
\bigr\},
\end{align*}
where, as usual, $P_\omega(Y)$ is the set of finite 
subsets of~$Y$.

The {\it Vietoris\/} topology on $P_\cl(X)$ is the supremum of 
these two. It can be given by the open base consisting of sets
\begin{align*}
\langle F\rangle
=
\Bigl\{
B\in P_\cl(X):
B\subseteq\bigcup F
\wedge
(\forall O\in F)\,B\cap O\ne\emptyset
\Bigr\}
=
\bigcap_{O\in F}O^-
\cap
\bigcap_{O\in F}O^+,
\end{align*}
for all $F\in P_\omega(\tau_X)$ where $\tau_X$~is the original
(open) topology on~$X$. Letting $\tau$~to denote the Vietoris topology, 
we clearly have for all $S\subseteq P_\cl(X)$, 
\begin{align*}
\cl_\tau\,S\,=\,
\cl_{\tau^-}S\,\cap\,\cl_{\tau^+}S
\end{align*}
(by a~general fact that the closure in the supremum of a~family 
of topologies is the intersection of the closures in each of the 
topologies in the family).

As well-known, if $X$~is compact Hausdorff then so is
$P_\cl(X)$ with the Vietoris topology. In particular, 
for any discrete~$X$ the space $P_\cl(\scc X)$ is compact 
Hausdorff. It is however not extremally disconnected 
(as e.g.~it contains countable convergent sequences).
Also if $X$~is $T_1$-space, then $P_\omega(X)$ is dense
in $P_\cl(X)$. In particular, $P_\omega(\scc X)$ is dense
in $P_\cl(\scc X)$, whence it easily follows that even
$P_\omega(X)$ is dense in $P_\cl(\scc X)$.

A~number of several facts about the Vietoris topology can 
be found in~\cite{Michael}. For a~more general construction 
of Vietoris-type topologies, see~\cite{Poppe}.


\subsection*{Vietoris topologies on the set of filters}

If $X$~is a~set, let $\filt X$ denote the set of filters
over~$X$, and if $D\in\filt X$, let $\ol D$~denote the set of 
ultrafilters extending it: $\ol D=\{u\in\scc X:D\subseteq u\}$.

\begin{lemma}\label{filters vs closed sets} 
Let $X$~be arbitrary set.
\\
\rule{0em}{1.5em}%
1. 
If $D\in\filt X$, then $\ol D\in P_\cl(\scc X)$.
\\
2. 
If $S\subseteq\scc X$, then $\bigcap S\in\filt X$ 
and moreover $\ol{\bigcap S}=\cl_{\scc X}S$.
\\
\rule{0em}{1.5em}%
Consequently, $\bigcap$~is an anti-isomorphism 
of $(P_\cl(\scc X),\subseteq)$ onto $(\filt X,\subseteq)$, and 
${}\ol{\phantom A}$~is the inverse of it.
\end{lemma}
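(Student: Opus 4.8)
The plan is to read everything off from the definition of the Stone topology on $\scc X$, whose basic clopen sets are the sets $\cl_{\scc X}A=\{u\in\scc X:A\in u\}$ for $A\subseteq X$ (clopen because $\scc X\setminus\cl_{\scc X}A=\cl_{\scc X}(X\setminus A)$, as each $u$ is an ultrafilter), together with the ultrafilter lemma. For part~1: the set $\ol D$ is non-empty by the ultrafilter lemma, since $D$ is a proper filter, and $\ol D=\{u:D\subseteq u\}=\bigcap_{A\in D}\cl_{\scc X}A$ is an intersection of closed (indeed clopen) sets, hence closed; so $\ol D\in P_\cl(\scc X)$.

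For part~2, I would first check that $\bigcap S$ is a proper filter: it contains $X$, is upward closed, and is closed under finite intersections, each of these because every $u\in S$ has the property, and $\emptyset\notin\bigcap S$ provided $S\ne\emptyset$ since $\emptyset$ lies in no ultrafilter. For the identity $\ol{\bigcap S}=\cl_{\scc X}S$, the inclusion $\supseteq$ is immediate: $S\subseteq\ol{\bigcap S}$ trivially, while $\ol{\bigcap S}$ is closed by part~1. For $\subseteq$, let $u$ satisfy $\bigcap S\subseteq u$ and let $\cl_{\scc X}A$ be a basic neighbourhood of $u$, so $A\in u$; were $\cl_{\scc X}A$ disjoint from $S$, then $X\setminus A\in v$ for every $v\in S$, whence $X\setminus A\in\bigcap S\subseteq u$, contradicting $A\in u$. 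Thus every basic neighbourhood of $u$ meets $S$, i.e.\ $u\in\cl_{\scc X}S$.

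For the ``consequently'' clause, parts~1 and~2 (the latter applied with a non-empty closed $C$ in place of $S$) give well-defined maps $\ol{\phantom A}:\filt X\to P_\cl(\scc X)$ and $\bigcap:P_\cl(\scc X)\to\filt X$. They are mutually inverse: $\ol{\bigcap C}=\cl_{\scc X}C=C$ for closed $C$ is exactly part~2, and $\bigcap\ol D=D$ is the standard fact that a subset of $X$ lies in a proper filter iff it lies in every ultrafilter extending that filter — which I would reprove by observing that if $A\notin D$ then $D\cup\{X\setminus A\}$ is centered (otherwise some member of $D$ is contained in $A$, forcing $A\in D$), hence extends to an ultrafilter $u\in\ol D$ with $A\notin u$. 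Both maps reverse inclusion ($D_1\subseteq D_2$ gives $\ol{D_2}\subseteq\ol{D_1}$, and $C_1\subseteq C_2$ gives $\bigcap C_2\subseteq\bigcap C_1$), so $\bigcap$ is an order anti-isomorphism with inverse $\ol{\phantom A}$.

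None of these steps is genuinely hard — this is the closed-set side of Stone duality for $\scc X$. The one point to keep an eye on is the standing convention that $\filt X$ contains only proper filters while $P_\cl(\scc X)$ excludes $\emptyset$: this is exactly what makes the two degenerate cases (the improper filter $P(X)$ and the empty closed set) excluded on both sides, so that the correspondence is an honest bijection.
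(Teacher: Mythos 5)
Your proof is correct and complete; note that the paper does not prove this lemma at all but simply cites Theorem~3.20 of Hindman--Strauss, so you have supplied the standard Stone-duality argument that the reference contains. Your closing remark about the two degenerate cases (the improper filter and the empty closed set) is exactly the right point to flag, since the lemma's statement ``If $S\subseteq\scc X$'' nominally admits $S=\emptyset$, for which $\bigcap S=P(X)$ falls outside $\filt X$ under the proper-filter convention that the bijectivity claim requires.
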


\begin{proof} 
See e.g.~\cite{Hindman Strauss}, Theorem~3.20.
\end{proof}

Thus $P_\cl(\scc X)$ can be identified with~$\filt X$. This allows 
to endow~$\filt X$ with the lower, upper, and standard Vietoris topologies.
As the inclusion of closed subsets of~$\scc X$ corresponds to the converse 
inclusion of filters over~$X$, we redefine: For any $C\in\filt X$, let now 
$C^{-},C^{+}\subseteq\filt X$ be the following sets:
\begin{align*}
C^-&=
\{B\in\filt X:C\subseteq B\},
\\
C^+&=
\{B\in\filt X:B\cup C\text{ is centered}\}
\end{align*}
(notice that $\ol B\subseteq\ol C$ iff $C\subseteq B$, and 
$\ol B\cap\ol C\ne\emptyset$ iff $B\cup C$~is centered).

The following lemma describes the closure in the Vietoris 
topologies on~$\filt X$.


\begin{lemma}\label{Vietoris closures} 
Let $S$~be a~set of filters over~$X$. 
\\
\rule{0em}{1.5em}%
1. 
$\cl_{\tau^-}S$~consists of all filters~$D$ such that
for every finite set $F\subseteq\filt X$, 
if each filter in~$S$ extends some $C\in F$
then $D$~extends some $C\in F$.
\\
2.
$\cl_{\tau^+}S$~consists of all filters~$D$ such that
for every filter~$C$, 
if each filter in~$S$~is compatible with~$C$
then $C$ and~$D$ are compatible.
\\ 
3. 
$\cl_{\tau}\,S$~consists of all filters~$D$ such that
for every finite set $F\subseteq\filt X$, 
if each filter in~$S$ extends some $C\in F$
then $D$~extends some $C\in F$, and 
if each filter in~$S$~is compatible with some $C\in F$
then $C$ and~$D$ are compatible.
\end{lemma}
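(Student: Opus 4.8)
The plan is to derive all three descriptions from the general formulas for the lower, upper, and standard Vietoris closures stated earlier in the excerpt, transported from $P_\cl(\scc X)$ to $\filt X$ via the anti-isomorphism of Lemma~\ref{filters vs closed sets}. Since $\cl_\tau S=\cl_{\tau^-}S\cap\cl_{\tau^+}S$ by the supremum formula already recorded, clause~3 is an immediate consequence of clauses~1 and~2, so the real work is in the first two.

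For clause~1, I would start from the formula
$$
\cl_{\tau^-}\mathcal S=
\bigcap\Bigl\{\bigcup_{C\in F}C^-:F\in P_\omega(P_\cl(\scc X))\wedge\mathcal S\subseteq\bigcup_{C\in F}C^-\Bigr\},
$$
where now $\mathcal S=\{\,\ol D:D\in S\,\}\subseteq P_\cl(\scc X)$ and, under the identification, a closed set $\ol C$ is ``$\supseteq$'' another $\ol D$ exactly when $C\subseteq D$ as filters. Thus a filter $D$ lies in $\cl_{\tau^-}S$ iff for every finite $F\subseteq\filt X$ with the property that every filter in~$S$ extends some member of~$F$, also $D$ extends some member of~$F$; this is verbatim the assertion of clause~1. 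The only point needing care is that the $C$'s in the general formula range over \emph{all} nonempty closed subsets of $\scc X$, i.e. over \emph{all} filters, which is exactly the quantification in the statement — so nothing is lost.

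For clause~2, I would similarly start from
$$
\cl_{\tau^+}\mathcal S=
\bigcap\bigl\{C^+:C\in P_\cl(\scc X)\wedge\mathcal S\subseteq C^+\bigr\}.
$$
Here $\mathcal S\subseteq(\ol C)^+$ means every $\ol D$ with $D\in S$ meets $\ol C$, which by the parenthetical remark preceding the lemma translates to: $C\cup D$ is centered for every $D\in S$, i.e. $C$ is compatible with every filter in~$S$; and $\ol D'\in(\ol C)^+$ translates to $C$ and $D'$ compatible. So $D$ lies in $\cl_{\tau^+}S$ iff for every filter $C$ compatible with all filters in~$S$, $C$ is compatible with $D$, which is clause~2. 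The main (mild) obstacle is purely bookkeeping: keeping straight that the \emph{reverse} inclusion convention on $\filt X$ swaps the roles of $C^-$ and $C^+$ relative to $P_\cl(\scc X)$, and checking that the redefinitions of $C^-$, $C^+$ on $\filt X$ given just before the lemma are precisely the images of the original $C^-$, $C^+$ under the anti-isomorphism — after which the three closure formulas transcribe mechanically. No genuinely hard step is expected.
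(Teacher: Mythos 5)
Your proposal is correct and follows essentially the same route as the paper: the paper likewise unwinds the recorded closure formulas for $\tau^-$ and $\tau^+$ (using the redefined $C^-$, $C^+$ on $\filt X$, i.e.\ your anti-isomorphism bookkeeping) and obtains clause~3 from $\cl_\tau S=\cl_{\tau^-}S\cap\cl_{\tau^+}S$. Nothing is missing.
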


\begin{proof} 
1.
A~filter~$D$ belongs to~$\cl_{\tau^-}S$ iff it belongs
to any closed basic set that includes~$S$. 
The latter means that for all $F\in P_\omega(\filt X)$, 
$S\subseteq\bigcup_{C\in F}C^-$ implies
$D\in\bigcup_{C\in F}C^-$, i.e.
$$
S\subseteq\bigcup_{C\in F}\{B\in\filt X:C\subseteq B\}
\;\text{ implies }\;
D\in\bigcup_{C\in F}\{B\in\filt X:C\subseteq B\},
$$
in other words,
$(\forall E\in S)\,(\exists C\in F)\,C\subseteq E$ 
implies $(\exists C\in F)\,C\subseteq D$, as required.

2. 
This is handled likewise and in fact easier:
A~filter~$D$ belongs to~$\cl_{\tau^+}S$ iff for all filters~$C$,
$S\subseteq C^+$ implies $D\in C^+$, i.e.
$(\forall E\in S)\,\ol C\cap\ol E\ne\emptyset$ implies
$\ol C\cap\ol D\ne\emptyset$, as required.

3. 
Since $\cl_\tau\,S\,=\,\cl_{\tau^-}S\,\cap\,\cl_{\tau^+}S$,
this follows from clauses~1,~2.
\end{proof}


\subsection*{Continuity of $R^*$}

Given $R:X\to P(X)$, define $R^{\centerdot}:X\to P_\cl(\scc X)$
as follows:
$$
R^\centerdot=\cl_{\scc X}\circ R.
$$
In other words, for all $x\in X$ we have
$
R^\centerdot(x)=\cl_{\scc X}R(x),
$
thus 
$$
v\in R^\centerdot(x)
\;\;\lra\;\;
v\in\cl_{\scc X}R(x)
\;\;\lra\;\;
v\in\wt{R(x)}
\;\;\lra\;\;
R(x)\in v
$$
for any $v\in\scc X$.
Note that the values of~$R^\centerdot$ are basic clopen sets 
in~$\scc X$ and that $R^\centerdot$~regarded as the relation
$$
R^\centerdot=
\{(x,v):v\in R^\centerdot(x)\}=
\{(x,v):R(x)\in v\}
$$ 
coincides with $\lcl R$, the left closure of~$R$, introduced in Section~3.

Since the Vietoris topology on $P_\cl(\scc X)$ is compact Hausdorff,
$R^\centerdot$~uniquely extends to the continuous map
$\wt{R^\centerdot}:\scc X\to P_\cl(\scc X)$ by the aforementioned rule:
$$
\wt{R^\centerdot}(u)=C
\text{ such that }
\{C\}=
\bigcap_{A\in u}\cl_{P_\cl(\scc X)}R^\centerdot A
$$
for all $u\in\scc X$.

It may be noticed that this formula looks quite similar to the formula 
$$
R^*(u)=
\bigcap_{A\in u}\cl_{\scc X}RA
$$
obtained in Lemma~\ref{R*u} for $R^*$~regarded as the map 
$R^*:\scc X\to P_\cl(\scc X)$. The difference is that now we deal 
with the sets $R^\centerdot A\subseteq P_\cl(\scc X)$ and their closures 
in the Vietoris space $P_\cl(\scc X)$, rather than with the sets 
$RA\subseteq X(\subseteq\scc X)$ and their closures in~$\scc X$.

(To avoid misreading, let us comment that $R^\centerdot A$~here 
denotes the image of~$A$ under the map~$R^\centerdot$, i.e. 
$\{R^\centerdot(x):x\in A\}$, while $RA$~there denoted the image 
of~$A$ under the relation~$R$, i.e. $\{y:(\exists x\in A)\,R(x,y)\}$.
We emphasize that the image of~$A$ under the map~$R^\centerdot$ is 
{\it not\/} the same that the image of~$A$ under the relation 
$R^\centerdot=\{(x,v):v\in R^\centerdot(x)\}$; the latter is 
$\{v:(\exists x\in A)\,R^\centerdot(x,v)\}=
\bigcup_{x\in A}R^\centerdot(x)$.)


The difference of these two formulas, however, is only
the difference of two ways to construct the same object. 
We are going now to show $R^*=\wt{R^\centerdot}\,$, thus 
confirming the guess that $R^*$, regarded as a~map of 
$\scc X$ into $P_\cl(\scc X)$, continuously extends an 
appropriate map generated by~$R$; namely,~$R^\centerdot\,$.

\begin{theorem}\label{continuity}
The maps $R^*$ and $\wt{R^\centerdot}$ coincide.
\end{theorem}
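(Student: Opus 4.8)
The plan is to show that the two maps $R^*\colon\scc X\to P_\cl(\scc X)$ and $\wt{R^\centerdot}\colon\scc X\to P_\cl(\scc X)$ agree on every $u\in\scc X$ by comparing their values as given by the two analogous intersection formulas. By Lemma~\ref{R*u} we have $R^*(u)=\bigcap_{A\in u}\cl_{\scc X}RA$, while by the defining rule for the continuous extension $\wt{R^\centerdot}(u)$ is the unique $C\in P_\cl(\scc X)$ with $\{C\}=\bigcap_{A\in u}\cl_{P_\cl(\scc X)}(R^\centerdot A)$. So it suffices to prove that $R^*(u)$ lies in $\bigcap_{A\in u}\cl_{P_\cl(\scc X)}(R^\centerdot A)$, since that intersection is a singleton and hence the containment forces equality.

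First I would unwind $R^\centerdot A=\{\cl_{\scc X}R(x):x\in A\}=\{\wt{R(x)}:x\in A\}$, a set of basic clopen subsets of $\scc X$, regarded as a subset of $P_\cl(\scc X)$. Then I would apply the description of the Vietoris closure: by the supremum decomposition $\cl_\tau=\cl_{\tau^-}\cap\cl_{\tau^+}$, membership $D\in\cl_{P_\cl(\scc X)}(R^\centerdot A)$ amounts to two conditions on $D$ — a lower one (for every finite family of closed sets covering $R^\centerdot A$ in the $\subseteq$-sense, $D$ is contained in one of them) and an upper one (every closed set meeting all members of $R^\centerdot A$ also meets $D$). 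For the specific $D=R^*(u)=\bigcap_{A'\in u}\cl_{\scc X}RA'$, I would verify both conditions directly, using that $A\in u$ and the equivalence $\{y:(\exists x\in A)\,R(x,y)\}\in v\lra(\forall x\in A)(\exists y)(R(x,y)\in v)$-type reasoning already packaged in Lemma~\ref{2.1}(i) and Lemma~\ref{R*u}. Concretely, the upper condition is the essential one: if a closed set $\cl_{\scc X}T$ (equivalently $\wt T$) meets $\wt{R(x)}$ for every $x\in A$, i.e. $R(x)\cap T\neq\emptyset$ for all $x\in A$, then $RA\cap T\neq\emptyset$, so $\wt T$ meets $\cl_{\scc X}RA\supseteq R^*(u)$; one then upgrades "$\wt T$ meets $R^*(u)$" from "meets $\cl_{\scc X}RA$ for all $A\in u$" using that $R^*(u)$ is the intersection of a chain-closed family. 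The lower condition: if $R^\centerdot A$ is covered by finitely many $\cl_{\scc X}T_i$ in the sense that each $\wt{R(x)}\subseteq\cl_{\scc X}T_i$ for some $i$, then some $T_i$ covers the $R(x)$ for a $u$-large set of $x\in A$, forcing $R^*(u)\subseteq\cl_{\scc X}T_i$ via Lemma~\ref{R*u}.

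An alternative, possibly cleaner route: since $\wt{R^\centerdot}$ is by construction the unique continuous extension of $R^\centerdot$ to $\scc X$, and since $R^\centerdot$ regarded as a relation equals $\lcl R$ (noted just before the theorem), it would suffice to show that $u\mapsto R^*(u)$, viewed as a map $\scc X\to P_\cl(\scc X)$, is continuous into the Vietoris topology and agrees with $R^\centerdot$ on the principal ultrafilters — for a principal $u_x$ one checks $R^*(u_x)=\cl_{\scc X}R(x)=R^\centerdot(x)$ straight from the definitions. Uniqueness of the continuous extension then closes the argument. I would likely present this second route in the main line, with the Vietoris-closure computation of Lemma~\ref{Vietoris closures} invoked to establish continuity: pulling back a subbasic set $O^-$ or $O^+$ of $P_\cl(\scc X)$ under $u\mapsto R^*(u)$ should give an open subset of $\scc X$, again by translating "$R^*(u)\subseteq O$" and "$R^*(u)\cap O\neq\emptyset$" into statements about which sets belong to $u$.

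The main obstacle I anticipate is the continuity (equivalently, the upper-Vietoris closure) step: one must show that $\{u:R^*(u)\cap\wt O\neq\emptyset\}$ is open, i.e. that if $R^*(u)$ meets an open $\wt O$ then already some $\wt A$ with $A\in u$ has the property that every ultrafilter $u'\in\wt A$ has $R^*(u')$ meeting $\wt O$ — this requires a compactness argument in $\scc X$ to pass from "for all $A\in u$, $\cl_{\scc X}RA$ meets $\wt O$" (which holds because $R^*(u)$ is their intersection and nonempty) to a single witnessing $A$, and it is exactly where the compactness of $\scc X$ and of $P_\cl(\scc X)$ are used. Everything else is bookkeeping with the definitions of $RA$, $R^\centerdot A$, and the Vietoris subbasis, together with Lemmas~\ref{R*u} and~\ref{Vietoris closures}.
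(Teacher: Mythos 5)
Your declared main line coincides with the paper's proof: show that $R^*$ agrees with $R^\centerdot$ on the principal ultrafilters and that $u\mapsto R^*(u)$ is continuous into the Vietoris topology, then conclude by uniqueness of the continuous extension. The extension step is exactly as in the paper, and the \emph{lower} semicontinuity step is also essentially right, though easier than your sketch makes it: one shows $(\exists v)(R^*(u,v)\wedge B\in v)\lra R^{-1}B\in u$ by a routine centered-family argument, whence $\{u:R^*(u)\cap\wt B\ne\emptyset\}=\wt{R^{-1}B}$ is clopen; no separate ``single witnessing $A$'' compactness argument is needed, and since an open $O\subseteq\scc X$ is a union of sets $\wt B$, the condition $R^*(u)\cap O\ne\emptyset$ reduces to the clopen case by taking unions.

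The genuine gap is in the \emph{upper} semicontinuity half, which you never establish --- and which you in fact mislabel: the condition ``$\{u:R^*(u)\cap\wt O\ne\emptyset\}$ is open'' that you single out as the main obstacle is the lower condition. Upper semicontinuity requires that $\{u:R^*(u)\subseteq O\}$ be open for every open $O\subseteq\scc X$, equivalently that $\{u:R^*(u)\cap C\ne\emptyset\}$ be closed for every closed $C\subseteq\scc X$. Here $O$ is an arbitrary (possibly infinite) union of basic clopen sets and $C$ an arbitrary intersection, so ``translating into statements about which sets belong to $u$'' does not reduce this to sets of the form $\wt T$, $T\subseteq X$, for free; yet both of your routes only ever test the Vietoris conditions against such $\wt T$. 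This is precisely where the paper does its real work: it introduces the filter extension $R^*(C,D)$, proves Lemma~\ref{filterextension} by a Zorn's-lemma argument, and uses it to compute $\{u:R^*(u)\cap C\ne\emptyset\}=\bigcap_{A\in E}\cl_{\scc X}R^{-1}A$ for $E=\bigcap C$, which is visibly closed. A repair closer in spirit to your sketch is possible --- each value $R^*(u)$ is a closed, hence compact, subset of $\scc X$, so $R^*(u)\subseteq\bigcup_i\wt{T_i}$ already forces $R^*(u)\subseteq\wt{T_{i_1}\cup\dots\cup T_{i_n}}$ for some finite subfamily, and $\{u:R^*(u)\subseteq\wt T\}=\wt{X\setminus R^{-1}(X\setminus T)}$ is clopen --- but some such reduction must be stated and justified; as written, the passage from the clopen case to general open or closed subsets of $\scc X$, which is the crux of the theorem, is missing.
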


\begin{proof}
Since the continuous extension $\wt{R^\centerdot}$ of~$R^\centerdot$ 
is unique, it suffices to verify that $R^*$ extends~$R^\centerdot$ 
and that it is continuous in the Vietoris topology on $P_\cl(\scc X)$.

The first claim is immediate: For all $x\in X$,
\begin{align*}
R^*(x)
&=\{v\in\scc X:(\forall A\ni x)\,RA\in v\}
\\
&=\{v\in\scc X:R(x)\in v\}
\\
&=\wt{R(x)}
=\cl_{\scc X}R(x)=R^\centerdot(x).
\end{align*}

Before proving that $R^*$~is continuous, recall the following concepts
(due to Kuratowski; see~\cite{Engelking}, 1.7.17).

Let $X$ and $Y$ be topological spaces, and $F:X\to P_\cl(Y)$.
The map~$F$ is {\it lower\/}, resp.~{\it upper $($semi-$)$continuous\/} 
iff for any open $O\subseteq Y$, 
the set $\{x:F(x)\cap O\ne\emptyset\}$, resp.~$\{x:F(x)\subseteq O\}$ 
is open in~$X$. Equivalently, iff for any closed $C\subseteq Y$, 
the set $\{x:F(x)\subseteq C\}$, resp.~$\{x:F(x)\cap C\ne\emptyset\}$ 
is closed in~$X$.

As well-known, $F$~is lower, resp.~upper continuous iff it is 
continuous in the lower, resp.~upper Vietoris topology on $P_\cl(Y)$, 
and $F$~is lower and upper continuous simultaneously iff it is 
continuous in the Vietoris topology on $P_\cl(Y)$ (see~\cite{Michael}).

Therefore, to prove that $R^*$~is continuous, it suffices to show 
that $R^*$~is lower and upper continuous simultaneously.

Show first that $R^*$~is lower continuous. Thus we must check that
for any open $O\subseteq\scc X$, the set $\{u\in\scc X:R^*(x)\cap O
\ne\emptyset\}$ is open in~$\scc X$. W.l.g.~we can pick~$O$ a~basic 
open set: $O=\wt B$ for some $B\subseteq X$. Observe that 
$$
(\exists v)\,R^*(u,v)\cap B\in v
\;\;\lra\;\;
R^{-1}B\in u.
$$
Indeed, the implication from the left to the right holds by 
the definition of~$R^*$. For the converse implication note that
the family $\{RA:A\in u\}$ is centered and pick any $v\in\scc X$ 
extending it; for any $A\in u$ we have $RA\in v$, so $R^*(u,v)$,
and in particular $RR^{-1}B=B\in v$.

Now we get:
\begin{align*}
\bigl\{u\in\scc X:R^*(u)\cap\wt B\ne\emptyset\bigr\}
&=
\bigl\{u\in\scc X:(\exists v)\,R^*(u,v)\cap v\in\wt B\;\bigr\}
\\
&=
\bigl\{u\in\scc X:(\exists v)\,R^*(u,v)\cap B\in v\;\bigr\}
\\
&=
\bigl\{u\in\scc X:R^{-1}B\in u\bigr\}
=
\wt{R^{-1}B},
\end{align*}
thus showing that the set $\{u:R^*(u)\cap\wt B\ne\emptyset\}$ is 
(basic) open in~$\scc X$, which proves the lower continuity of~$R^*$.


Now we must verify that $R^*$~is upper continuous. Before that, 
we introduce the {\it filter extension\/} of relations, used in 
the rest of the  proof but also having an independent interest. 
We shall denote it by the same symbol~${}^*$ since on ultrafilters, 
it gives the previous concept.

For all $R\subseteq X\times X$ and $C,D\in\filt X$, we let
$$
R^*(C,D)
\;\;\lra\;\;
(\forall A\in C)\,
D\cup\{RA\}\text{ is centered}.
$$
(As usual, $S$~is {\it centered\/} iff it has the finite intersection 
property.) Note that for $C,D\in\scc X\subseteq\filt X$, the meaning 
of $R^*(D,C)$ is the same as earlier.

Recall that for $D$~a~filter, $\ol D$~denotes the set of 
ultrafilters extending~$D$. The next result reformulates 
the filter extension in terms of the ultrafilter extension.

\begin{lemma}\label{filterextension}
For all $R\subseteq X\times X$ and $C,D\in\filt X$, 
$$
R^*(C,D)
\;\;\lra\;\;
(\exists u\in\ol C)\,
(\exists v\in\ol D)\,
R^*(u,v).
$$
\end{lemma}

\begin{proof}
The ($\leftarrow$)~part is clear. So assume $R^*(C,D)$, i.e.~that 
$D\cup\{RA\}$~is centered for every $A\in C$, and find $u\in\ol C$ 
and $v\in\ol D$ such that $R^*(u,v)$ in the former meaning, 
i.e.~that $RA\in v$ for each $A\in u$.

Observe that the set $G=D\cup\{RA:A\in C\}$ is centered. Indeed, if 
$m<\omega$ and $A_i\in C$ for all $i<m$, then $A=\bigcap_{i<m}A_i$ 
is in~$C$ and hence $D\cup\{RA\}$~is centered by the assumption. 
Since the inclusion $RA\subseteq\bigcap_{i<m}RA_i$ holds always, 
this shows that $G$~is centered.

Now pick any $v\in\scc X$ extending~$G$. As $D\subseteq G$, we have 
$v\in\ol D$. Consider the family of filters that extend~$C$ and have
the required connection to~$v$:
$$
S=
\{E\in\filt X:C\subseteq E\wedge(\forall A\in E)\,RA\in v\}.
$$
It easy to see that $S$~is closed under $\subseteq$-increasing chains.
By Zorn's lemma, $S$~has a~maximal element~$E$. We verify that $E\in\scc X$.

Assume the converse: $E\notin\scc X$, so there is $A\subseteq X$ 
such that both $E\cup\{A\}$ and $E\cup\{X\setminus A\}$ are centered, 
and hence, can be properly extended to some filters $E'$ and~$E''$, resp. 
Observe that $RX\in v$. Hence, as $R\bigcup_iA_i=\bigcup_iRA_i$ holds always,
$RA\in v$ or $R(X\setminus A)\in v$. If $RA\in v$ then $E'\in S$, and 
if $R(X\setminus A)\in v$ then $E''\in S$; thus in each of both cases,
$E$~is not maximal in~$S$; a~contradiction.

This completes the proof of Lemma~\ref{filterextension}.
\end{proof}

Now we turn back to the proof of the upper continuity of~$R^*$. 
We show that for any closed $C\subseteq\scc X$, the set
$\{u\in\scc X:R^*(u)\cap C\ne\emptyset\}$ is closed in~$\scc X$.
Indeed, let $C\in P_\cl(X)$ and let $E\in\filt X$ be such that 
$C=\ol E$, i.e. $E=\bigcap C$. We have:
\begin{align*}
\bigl\{u\in\scc X:R^*(u)\cap C\ne\emptyset\bigr\}
&=
\bigl\{u\in\scc X:(\exists v\in C)\,R^*(u,v)\bigr\}
\\
&=
\bigl\{u\in\scc X:(\exists v\in C)\,(R^{-1})^*(v,u)\bigr\}
\\
&=
\bigl\{u\in\scc X:(\exists v\in\ol E)\,(R^{-1})^*(v,u)\bigr\}
\\
&=
\bigl\{u\in\scc X:(R^{-1})^*(E,u)\bigr\}
\\
&=
\bigl\{u\in\scc X:(\forall A\in E)\,R^{-1}A\in u\bigr\}
\\
&=
\bigcap_{A\in E}\bigl\{u\in\scc X:R^{-1}A\in u\bigr\}
=
\bigcap_{A\in E}\cl_{\scc X}R^{-1}A.
\end{align*}
Here the second equality uses the identity $(R^{-1})^*=(R^*)^{-1}$
stated in Theorem~\ref{2.2}, and the fourth equality uses 
Lemma~\ref{filterextension}. Therefore, the set 
$\{u:R^*(u)\cap C\ne\emptyset\}$ is closed in~$\scc X$,
which proves the upper continuity of~$R^*$.

The proof of Theorem~\ref{continuity} is now complete.
\end{proof}


\subsubsection*{Acknowledgements.}
I~am grateful to Nikolai L.~Poliakov for useful 
discussions and an assistence in proving some facts.




\begin{thebibliography}{111}

\bibitem{Chang Keisler} 
C.\,C.~Chang, H.\,J.~Keisler.
{\it Model theory.\/}
North-Holland, Amsterdam--London--N.Y., 1973. 

\bibitem{Comfort Negrepontis}
W.\,W.~Comfort, S.~Negrepontis.
{\it The theory of ultrafilters.\/}
Springer, Berlin, 1974.

\bibitem{Engelking}
R.~Engelking.
{\it General topology\/}.
Monogr. Matem.~60, Warszawa, 1977.

\bibitem{Frayne Morel Scott} 
T.~Frayne, A.\,C.~Morel, D.\,S.~Scott. 
{\it Reduced direct products.\/}
Fund.~Math.~51 (1962), 195--228; 53 (1963), 117.

\bibitem{Goldblatt} 
R.\,I.~Goldblatt. 
{\it Varieties of complex algebras.\/}
Annals of Pure and Applied Logic, 
44~(1989), 173--242. 

\bibitem{Goldblatt Thomason} 
R.\,I.~Goldblatt, S.\,K.~Thomason. 
{\it Axiomatic classes in propositional modal logic.\/}
In: J.~Crossley (ed.). {\it Algebra and Logic\/}. 
Lecture Notes in Math.~450 (1974), 163--173. 

\bibitem{Goranko} 
V.~Goranko. 
{\it Filter and ultrafilter extensions of structures: 
universal-algebraic aspects.\/}
Manuscript, 2007. 

\bibitem{Hindman Strauss} 
N.~Hindman, D.~Strauss.
{\it Algebra in the Stone--\v{C}ech Compactification.\/}
Second edition, revised and expanded, 
Walter de~Gruyter, Berlin--N.Y., 2012.

\bibitem{Jonsson Tarski} 
B.~J{\'o}nsson, A.~Tarski. 
{\it Boolean algebras with operators.\/} 
Part~I: Amer. J.~Math. 73:4 (1951), 891--939. 
Part~II: ibid. 74:1 (1952), 127--162. 

\bibitem{Kochen} 
S.~Kochen. 
{\it Ultraproducts in the theory of models.\/}
Ann.~of Math.~74:2 (1961), 221--261.

\bibitem{Lemmon} 
E.\,J.~Lemmon. 
{\it Algebraic semantics for modal logic.\/}
Part~II: J.~Symbolic Logic~31:2 (1966), 191--218.

\bibitem{Lemmon Scott} 
E.\,J.~Lemmon, D.~Scott. 
{\it An introduction to modal logic.\/}
Oxford, Blackwell, 1977.

\bibitem{Michael}
E.~Michael. 
{\it Topologies on spaces of subsets\/}.
Trans. Amer. Mathem. Soc.~71 (1951), 152--182.

\bibitem{Poliakov Saveliev 2018}
N.\,L.~Poliakov, D.\,I.~Saveliev.
{\it On ultrafilter extensions of first-order models 
and ultrafilter interpretations\/}.
2018, 46 pp., 
arXiv:1812.06248.

\bibitem{Poppe}
H.~Poppe. 
{\it Einige Bemerkungen \"uber den Raum der abgeschlossenen 
Mengen\/}.
Fundam. Mathem.~59 (1966), 159--169.

\bibitem{Saveliev} 
D.\,I.~Saveliev.
{\it Ultrafilter extensions of models.\/}
Lecture Notes in AICS 6521 (2011), 162--177. 

\bibitem{Saveliev expanded} 
D.\,I.~Saveliev.
{\it On ultrafilter extensions of models.\/}
In: S.-D.~Friedman et~al.~(eds.).
The Infinity Project Proc. 
CRM~Documents~11, Barcelona, 2012, 599--616.

\bibitem{Saveliev(orders)} 
D.\,I.~Saveliev.
{\it Ultrafilter extensions of linearly ordered sets.\/}
Order 32:1 (2015), 29--41. 
arXiv:1310.4533.

\bibitem{Saveliev(idempotents)} 
D.\,I.~Saveliev.
{\it On idempotents in compact left topological 
universal algebras\/}. 
Topology Proc.~43 (2014), 37--46.

\bibitem{Saveliev Shelah} 
D.\,I.~Saveliev, S.~Shelah. 
{\it Ultrafilter extensions do not preserve 
elementary equivalence\/}. 
Mathematical Logic Quarterly, 65:4 (2019), 611–615.
arxiv:1712.06198.

\bibitem{van Benthem} 
J.\,F.\,A.\,K.~van Benthem. 
{\it Notes on modal definability.\/}
Notre Dame J.~Formal Logic, 
30:1~(1988), 20--35. 

\bibitem{Venema} 
Y.~Venema. 
{\it Model definability, purely modal.\/}
In: J.~Gerbrandy et~al. (eds). 
{\it JFAK. Essays Dedicated to Johan vam Benthen 
on the Occasion on his 50th Birthday\/}. 
Amsterdam, 1999.

\end{thebibliography}
\end{document}